\documentclass[11pt]{article}

\usepackage[a4paper,margin=1in]{geometry}
\usepackage{amsmath,amssymb,amsthm}
\usepackage{booktabs}
\usepackage{graphicx}
\usepackage{hyperref}
\newtheorem{theorem}{Theorem}[section]
\newtheorem{lemma}[theorem]{Lemma}
\newtheorem{corollary}[theorem]{Corollary}
\newtheorem{remark}[theorem]{Remark}

\usepackage{tikz}
\usepackage{float}
\usetikzlibrary{arrows.meta,decorations.pathreplacing}
\title{Parallel Machine Scheduling with a Single Server and Loading--Unloading Operations}

\author{%
Keramat Hasani  \\[1ex]
Centre for Maritime Studies, National University of Singapore, Singapore  \\[1ex]
email: hasani@nus.edu.sg  \\[3ex]
  Frank Werner  \\[1ex]
Otto-von-Guericke Business School Magdeburg, Germany \\[1ex]
  email: frank.werner@ovgu.de; ORCID: 0000-0002-0709-3591
}
 
\vspace{2mm}

\date{August 11, 2026}
 
\begin{document}

\maketitle


\begin{center}
\textbf{Abstract}
\end{center}

This paper investigates a parallel machine scheduling problem featuring a single common server responsible for both loading and unloading operations. Each job consists of a unit-time loading operation, non-preemptive processing on one of \(m\) identical machines, and a unit-time unloading operation executed by the same server. The objective is to minimize the makespan. Unlike classical loading-only common-server models, our setting requires the server to handle an unloading operation immediately after a job's processing phase concludes. We prove that the corresponding decision problem is strongly NP-complete when the number of machines is given as part of the input. Furthermore, we analyse the worst-case performance of the List Scheduling (LS) and Longest Processing Time (LPT) heuristics. For three machines, we establish that Algorithm LS achieves an approximation ratio of at most \(5/2\). For an arbitrary fixed \(m \ge 3\), we show that the general LS bound approaches \(4-3/m\) as the number of jobs grows, whereas for Algorithm LPT, we prove the finite-instance ratio
$3-\frac{2}{m}+\frac{(m-1)(m-2)}{mn}.$ Thus, for each fixed \(m\), the LPT bound approaches \(3-2/m\) as the number of jobs grows.\\[1ex]

\noindent
{\bf Keywords:} Scheduling; Parallel machine problems; Single loading-unloading server; Approximation  \\[1ex]

\noindent
{\bf MSC classification:}  90B35, 90C59, 68W40
\section{Introduction}
\label{sec:introduction}

Many production and transport systems contain a small number of expensive
resources that must be shared by several parallel service units. In flexible
manufacturing systems, for example, machines may be able to process jobs in
parallel, while a single robot or handling device performs tool changes, part
setups, loading operations, or transfers between pieces of equipment. In such
systems, machine capacity alone does not determine throughput; the availability
of the common handling resource can create forced idle time and can change the
structure of good schedules. Similar bottlenecks arise in transport operations.
In ports, vessels may be assigned to different berths, but their movements may
have to pass through a restricted access channel or a one-way navigation channel.
Integrated berth and channel-planning models explicitly recognise that vessel
service decisions and channel-use decisions interact, and that channel capacity
can become the operational bottleneck \cite{ZhenLiangZhugeLeeChew2017,
LiuLiShengWang2021,ZhangZheng2020}. These applications motivate scheduling
models in which several parallel processors share a single auxiliary resource.

This paper studies one such model. We consider the problem
$
        P,S1\mid s_j=t_j=1\mid C_{\max}.
$
There are \(m\) identical parallel machines and one common server. Each job must
first be loaded by the server, then processed without interruption on one
machine, and then immediately unloaded by the same server. Here, the loading and unloading
times are both equal to one. 

The loading-only common-server problem has been studied for several decades.
In that model, each job requires a setup, or loading operation, by a common
server immediately before processing, after which the server is released.
Hall et al. \cite{HallPottsSriskandarajah2000} developed a
systematic treatment of this environment and classified a number of polynomial,
pseudo-polynomial and NP-hard cases. Kravchenko and Werner
\cite{KravchenkoWerner1997} proved, among other results, that the variable
machine problem
$P,S1\mid s_j=1\mid C_{\max}$
is strongly NP-hard. Their proof uses a modular construction from
\textsc{3-Partition}. Brucker et al.
\cite{BruckerDhaenensFlipoKnustKravchenkoWerner2002} continued the complexity
classification of parallel machine problems with a single server. These papers
show that adding even one common loading server to a parallel machine system
substantially changes the complexity landscape. The broader literature on setup
times and setup costs is surveyed by Allahverdi et al.
\cite{AllahverdiNgChengKovalyov2008}.

The model in the present paper is different from the loading-only setting
because the server must return to each job at the completion of its processing.
From the server's point of view, every job is therefore a coupled pair of unit
operations with an exact delay between them. This places the problem close to
coupled-task scheduling with exact delays, a class surveyed by Khatami et al. \cite{KhatamiSalehipourCheng2020}. However, our problem is
not a pure coupled-task problem: while the two server operations are separated
by an exact delay, the job also occupies one of the parallel machines throughout
the interval from loading to unloading. The difficulty is therefore caused by
the simultaneous interaction of machine capacity and server feasibility.

The closest theoretical predecessor is the two-machine loading-unloading model
studied by Jiang et al. \cite{JiangZhangHuDongJi2015}. They considered two
identical parallel machines sharing a single server that loads and unloads all
jobs, with unit loading and unloading times, and analysed the worst-case
performance of the algorithms LS (list scheduling) and LPT (longest processing time) 
for the makespan objective. A preemptive variant was
studied by Jiang et al. \cite{JiangWangZhou2014}, who gave an
\(O(n\log n)\) algorithm. More recently, Elidrissi et al.
\cite{ElidrissiBenmansourHasaniWerner2024} developed exact formulations,
polynomial cases, lower bounds and computational methods for the nonpreemptive
two-machine common-server loading-unloading problem. These works provide a
foundation for the two-machine case. Much less is known when the number of
machines is variable or when one asks how the two-machine structural arguments
change when more than two machines are available.

This paper addresses the above gap for the unit loading-unloading model. We first
settle the complexity of the variable-machine case by proving that the decision
problem is strongly NP-complete when \(m\) is part of the input. We then study the
worst-case performance of list-based schedules. For three machines, we prove
\[
        C_{\max}^{\rm LS}\le \frac52 C^* ,
\]
where $C^*$ denotes the optimal makespan. 
Since LPT is a particular list schedule, the same bound also holds for LPT when
\(m=3\). For arbitrary \(m\ge3\), we establish the general LS guarantee
\[
        C_{\max}^{\rm LS}
        \le
        \left(
        4-\frac3m-\frac{2(m-1)}{mn}
        \right) C^* .
\]
We also prove the following bound for an LPT schedule:
\[
        C_{\max}^{\rm LPT}
        \le
        \left(
        3-\frac2m+\frac{(m-1)(m-2)}{mn}
        \right)C^*.
\]
These results show that the three-machine case has a stronger structure than
the general \(m\)-machine case, while the general estimates remain valid for any
number of machines.

The remainder of the paper is organised as follows.
Section~\ref{sec:problem-lower-bounds} defines the model and gives the lower
bounds used throughout. Section~\ref{sec:complexity-variable-m} proves the
strong NP-completeness result for the variable-machine decision problem.
Section~\ref{sec:server-blocking-preliminaries} introduces the blocking
notation. Section~\ref{sec:ls-three-machines-five-halves} proves the
three-machine \(5/2\)-bound for LS. Section~\ref{sec:ls-general-performance}
establishes the general-\(m\) LS bound. Section~\ref{sec:lpt-general-m} treats
the LPT rule.

\section{Problem Definition and Lower Bounds}
\label{sec:problem-lower-bounds}

We consider the integral-time scheduling problem
$ P,S1\mid s_j=t_j=1\mid C_{\max}.$
There are \(m\) identical parallel machines and one common server. Each job
\(J_j\) requires one unit of loading by the server, followed by an uninterrupted
processing on one machine, and finally one unit of unloading by the same server.
We write $e_j=p_j+2$ for the execution length of \(J_j\), namely the time from the beginning of
loading to completion. Since \(p_j\ge1\), we have \(e_j\ge3\). If \(J_j\) is
loaded at integer time \(S_j\), then its unloading slot $U_j$ and completion time $C_j$ are
\[
        U_j=S_j+e_j-1,
        \qquad
        C_j=S_j+e_j .
\]
A server operation at integer time \(t\) occupies the unit slot \([t,t+1)\).
Hence the server slots
\[
        S_j,\quad U_j,\qquad j=1,\ldots,n,
\]
are pairwise distinct. Job \(J_j\) occupies its assigned machine during the interval $[S_j,C_j).$ Figure~\ref{fig:job-structure} summarises this notation.
\begin{figure}[H]
\centering
\begin{tikzpicture}[
    x=1.0cm,
    y=0.85cm,
    >=Stealth,
    font=\small,
    machine/.style={draw=black, fill=black!8, minimum height=0.42cm, inner sep=0pt},
    server/.style={draw=black, fill=black!22, minimum height=0.42cm, inner sep=0pt},
    empty/.style={draw=black!45, fill=white, minimum height=0.42cm, inner sep=0pt}
]


\node[anchor=east] at (-0.35,1.20) {machine};
\node[anchor=east] at (-0.35,0.00) {server};

\draw[machine] (0,0.98) rectangle (1,1.42);
\draw[machine] (1,0.98) rectangle (5,1.42);
\draw[machine] (5,0.98) rectangle (6,1.42);

\node[scale=0.6] at (0.5,1.20) {loading};
\node at (3.0,1.20) {\scriptsize processing \(p_j\)};
\node[scale=0.6] at (5.5,1.20) {unloading};

\draw[server] (0,-0.22) rectangle (1,0.22);
\draw[empty]  (1,-0.22) rectangle (5,0.22);
\draw[server] (5,-0.22) rectangle (6,0.22);

\node at (0.5,0.00) {\scriptsize \(L_j\)};
\node at (5.5,0.00) {\scriptsize \(U_j\)};

\draw[->,black!65] (-0.15,-0.95) -- (6.45,-0.95) node[right] {time};

\foreach \x in {0,1,5,6}{
    \draw[black!55,dashed] (\x,-0.78) -- (\x,1.60);
}

\node[below] at (0,-0.95) {\(S_j\)};
\node[below] at (1,-0.95) {\(S_j+1\)};
\node[below] at (5,-0.95) {\(U_j\)};
\node[below] at (6,-0.95) {\(C_j\)};

\draw[<->,thick] (0,2) -- (6,2);
\node[above] at (3,2) {\(e_j=p_j+2\)};



\node[above] at (0.5,1.43) {\scriptsize unit};
\node[above] at (5.5,1.43) {\scriptsize unit};

\end{tikzpicture}

\caption{Timing of a job \(J_j\). }
\label{fig:job-structure}
\end{figure}

Let
$ E=\sum_{j=1}^{n}e_j,$ and $e_{\max}=\max_{1\le j\le n}e_j.$ The objective is to minimize
$C_{\max}=\max_{1\le j\le n}C_j.$ The optimal makespan is denoted by \(C^*\).

We shall use the following elementary lower bounds.

\begin{lemma}
\label{lem:lower-bounds}
Every feasible schedule satisfies
\[
        C^*\ge \frac{E}{m},
        \qquad
        C^*\ge 2n,
        \qquad
        C^*\ge e_{\max},
        \qquad
        C^*\ge \frac{E+2(m-1)}{m}.
\]
\end{lemma}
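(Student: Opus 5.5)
Each of the four inequalities is verified for an arbitrary feasible schedule with makespan \(C_{\max}\); taking an optimal schedule then gives the bounds for \(C^*\). Throughout, \(C_{\max}\) is measured from time \(0\), all start times are nonnegative integers, and job \(J_j\) occupies a machine during \([S_j,C_j)\), an interval of length \(e_j\).

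For \(C^*\ge E/m\), the machine-occupation intervals assigned to one machine are pairwise disjoint and lie in \([0,C_{\max})\). Summing over the \(m\) machines gives \(E=\sum_j e_j\le mC_{\max}\). For \(C^*\ge 2n\), the \(2n\) server slots \(S_j,U_j\) are pairwise distinct integers. Each slot occupies \([t,t+1)\subseteq[0,C_{\max})\), since \(U_j+1=C_j\le C_{\max}\), so \(2n\le C_{\max}\). For \(C^*\ge e_{\max}\), any job satisfies \(C_{\max}\ge C_j=S_j+e_j\ge e_j\).

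The last bound, \(mC^*\ge E+2(m-1)\), is the only one needing a real argument. The idea is that because the server serializes loadings and unloadings, the machines cannot all be busy at the very beginning and the very end. First assume \(n\ge m\); the case \(n<m\) is handled below. Sort the distinct loading times as \(S_{(1)}<S_{(2)}<\dots<S_{(n)}\), which are integers, so \(S_{(k)}\ge k-1\). Each machine is idle from time \(0\) until its first loading. If the first jobs on the \(m\) machines, when machines are used, start at distinct times \(a_1<\dots<a_m\), then \(a_i\ge i-1\), so total initial idleness is at least \(\sum_{i}(i-1)=m(m-1)/2\). Symmetrically, the final unloadings on the machines occur at distinct slots \(b_1>\dots>b_m\) with \(b_i\le C_{\max}-i\), so each machine is idle for \(C_{\max}-(b_i+1)\ge i-1\) at the end, giving another \(m(m-1)/2\). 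Adding the two gives total idle time at least \(m(m-1)\), so \(mC_{\max}\ge E+m(m-1)\), which implies the stated bound since \(m(m-1)\ge 2(m-1)\).

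The main obstacle is machines that receive no job, which can happen when \(n<m\) or in unbalanced schedules. An unused machine is idle for the whole horizon, contributing \(C_{\max}\ge e_{\max}\ge 3\ge2\) per such machine. That is at least as much as the two units per machine needed for the weaker claimed bound, so I would actually prove only the weaker statement. There are \(m\) machines and at least \(m-1\) of them, all but the one whose first loading is at the earliest time, start strictly after time \(0\), giving initial idleness of at least \(1\) each. Likewise all but the machine with the latest final unloading end strictly before \(C_{\max}\), giving final idleness of at least \(1\) each. Unused machines have idleness \(C_{\max}\ge 3\), which is at least the \(2\) units they must contribute. Summing the idleness gives \(mC_{\max}-E\ge 2(m-1)\), which is exactly the fourth inequality. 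I would present this simpler counting rather than the quadratic one, since it handles the degenerate machines uniformly.
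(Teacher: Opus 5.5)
Your proof is correct and follows the paper's. The first three bounds are argued the same way. Your final per-machine count for the fourth bound is the paper's argument tallied by machine instead of by slot: every machine except at most one is idle in slot $[0,1)$, every machine except at most one is idle in slot $[C_{\max}-1,C_{\max})$, and unused machines are idle throughout. The paper states the same fact as ``at most one machine is busy in the first slot and at most one in the last slot.'' Dropping the quadratic detour is right, since as first written it tacitly assumes every machine receives a job.
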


\begin{proof}
The first inequality is the machine-workload bound. The second follows from the
\(2n\) unit operations that must be performed by the common server. The third is
immediate from the largest execution length.

For the final inequality, shift the schedule so that the first server operation
starts at time zero. In the first slot \([0,1)\), at most one machine can be
occupied, since at most one job can be loaded. Thus, at least \(m-1\) units of
machine capacity are idle in this slot. In the final slot \([C^*-1,C^*)\), any
active job must complete at time \(C^*\), and hence must use the server for
unloading in slot \(C^*-1\). Since only one unloading can be performed in that
slot, at most one machine is occupied, and again at least \(m-1\) units of
machine capacity are idle.

Because \(e_j\ge3\) and \(n\ge1\), the first and final slots are distinct.
Therefore, we get
\[
        E\le mC^*-2(m-1),
\]
which gives
\[
        C^*\ge \frac{E+2(m-1)}{m}.
\]
\end{proof}


\section{Complexity of the Variable-Machine Problem}
\label{sec:complexity-variable-m}

We next consider the computational complexity of the makespan problem. The
result in this section concerns the case when the number of machines is part of
the input. The proof follows the modular idea used by Kravchenko and Werner for
the loading-only common-server problem, but the construction has to be modified
because each job now has two server operations, one for loading and one for
unloading.

Consider the decision version of
$ P,S1\mid s_j=t_j=1\mid C_{\max}.$
An instance asks whether there exists a feasible integral schedule with
$C_{\max}\le T .$

\begin{theorem}
\label{thm:variable-m-strongly-np-complete}
When \(m\) is part of the input, the decision problem
\[
        P,S1\mid s_j=t_j=1\mid C_{\max}
\]
is strongly NP-complete.
\end{theorem}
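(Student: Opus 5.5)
Membership in NP is immediate, because a schedule can be certified by the integer loading times \(S_j\) and the machine assignments. Checking it means verifying that the \(2n\) server slots \(S_j, U_j=S_j+e_j-1\) are pairwise distinct, that no machine carries overlapping intervals \([S_j,C_j)\), and that \(C_j\le T\). This takes polynomial time, and we may assume \(T\le \sum_j e_j\). For strong NP-hardness we will reduce from \textsc{3-Partition}. The instance consists of integers \(a_1,\dots,a_{3q}\) with \(B/4<a_i<B/2\) and \(\sum a_i=qB\), given in unary. The output instance will have numbers polynomial in \(q\) and \(B\), which keeps the reduction pseudo-polynomial and hence yields strong NP-completeness.

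The construction is \emph{tight}, in the spirit of the proof of Lemma~\ref{lem:lower-bounds}. We choose \(T=2n\), so the server must be busy in every slot \([t,t+1)\), \(t=0,\dots,2n-1\). Every slot is then either a loading slot or an unloading slot, and the exact-delay structure \(U_j=S_j+e_j-1\) turns the schedule into a perfect matching of the time slots \(0,\dots,2n-1\) into pairs \((S_j,U_j)\) with prescribed differences \(e_j-1\). The machine count \(m\) is set equal to the maximum number of jobs that can be simultaneously active. We intend it to be large enough that the machine constraint is slack and the difficulty sits in the server, or else exactly tight so that the machine-workload bound \(E=mT-\text{idle}\) pins the structure; we will pick whichever makes the argument cleaner, probably the former.

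The gadgets will follow the modular idea of Kravchenko and Werner. We introduce a \emph{frame} of many long jobs whose execution lengths are large multiples of a modulus \(M\) (for instance \(M\approx 4B\) or larger). Together with their loading and unloading slots they carve the horizon into \(q\) windows, each containing exactly \(B\) free slot pairs in a fixed pattern. Each element \(a_i\) becomes a group of \(a_i\) unit jobs, or better, a chain of short jobs with \(e=3\) (so \(p=1\)) that nest as consecutive pairs \((t,t+2),(t+1,t+3)\). We then attach an element-specific long job with \(e_j\equiv\) something mod \(M\), which forces the \(a_i\) short jobs to occupy consecutive free slots inside a single window. The forward direction is straightforward: given a 3-partition, we lay out each window with its three element blocks and verify the server is busy exactly \(2n\) slots and at most \(m\) machines are in use.

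The main obstacle is the converse. We must show that any schedule with \(C_{\max}=2n\) respects the intended frame. That is, the frame jobs must occupy their forced positions, and the element blocks must be neither split across windows nor interleaved in unexpected ways. We can no longer use unloading-only or loading-only counting here, because loads and unloads can mix freely. The plan is to argue by counting parity and residues modulo \(M\). Since every slot is used, the first slot is a load, the last an unload, and the frame jobs have lengths congruent to distinct residues large compared with the short jobs. We then show inductively from time \(0\) that the only way to fill the next free slot is the intended one, which forces the window boundaries. The bounds \(B/4<a_i<B/2\) then ensure that each window receives exactly three blocks summing to \(B\). If this rigidity argument fails, the fallback is to make \(m\) tight and use the idle-capacity accounting from Lemma~\ref{lem:lower-bounds} to force the frame positions instead.
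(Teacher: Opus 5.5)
\textbf{There is a genuine gap.} You have not yet given a reduction. The frame jobs, the window pattern, the modulus $M$, the element-specific long jobs and the value of $m$ are all left open. The converse direction, which is where all the difficulty of a hardness proof lies, is only a plan (``argue by parity and residues'', with a fallback if rigidity fails).

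Moreover, the gadget as sketched cannot work. The $a_i$ short jobs with $e=3$ meant to represent element $a_i$ are indistinguishable from the short jobs of every other element. Through them, the instance only records $\sum_i a_i=qB$, and nothing stops a schedule from splitting and recombining these blocks across windows. An element-specific long job contributes just two server slots at a fixed distance, so it has no evident mechanism for binding a particular set of identical short jobs to consecutive slots.

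Finally, by choosing $m$ so large that the machines are slack, you are in effect trying to prove hardness of the single-server coupled-task problem with unit operations and exact delays under a tight horizon. That is, you need hardness of perfect pairings of $\{0,\dots,2n-1\}$ with prescribed differences $e_j-1$. This is a stronger and far more delicate statement than the theorem requires, and the machines, the one resource the theorem lets you scale with the input, play no role.

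\textbf{The missing idea} is to let the machines be the 3-Partition bins and to use the server only in the easy direction, by showing it does not obstruct a yes-instance. The paper takes $m$ machines, one job per item with $e_i=2ma_i$, and $T=2mB+2m-2$.

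Forward direction: given a 3-partition, machine $j$ runs its triple back-to-back from time $2(j-1)$. Because every $e_i$ is a multiple of $2m$, loadings on machine $j$ lie in residue class $2(j-1)$ and unloadings in class $2(j-1)-1$ modulo $2m$. Different machines, and loadings versus unloadings, therefore occupy distinct residue classes, so all server slots are distinct, and the makespan is exactly $T$.

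Converse: every machine load is a multiple of $2m$ and at most $T<2m(B+1)$, hence at most $2mB$. Since the total is $2m\cdot mB$, every load equals $2mB$, so each machine receives items summing to $B$, and the size bounds force triples. This is pure load counting. The divisibility by $2m$ absorbs the $2m-2$ units of slack created by the staggered starts, so no rigidity argument about the server schedule is needed at all.
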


\begin{proof}
Membership in NP is immediate: a certificate specifies a machine assignment and
integer loading time for each job, and feasibility together with the bound
\(C_{\max}\le T\) can be verified in polynomial time.

The reduction is from \textsc{3-Partition}. An instance of \textsc{3-Partition} is a
set of \(3m\) positive integers
\[
        a_1,\ldots,a_{3m}
\]
such that
\[
        \sum_{i=1}^{3m}a_i=mB,
        \qquad
        \frac{B}{4}<a_i<\frac{B}{2}
        \quad (i=1,\ldots,3m).
\]
The question is whether the items can be partitioned into \(m\) disjoint triples
whose sums are all equal to \(B\). We may assume \(m\ge2\).

From this instance, we construct an instance of the loading-unloading scheduling
problem with \(m\) machines and \(3m\) jobs. For every item \(a_i\), create one
job \(J_i\) with execution length
\[
        e_i=2m a_i .
\]
Since \(e_i=p_i+2\), set
\[
        p_i=2m a_i-2 .
\]
As \(m\ge2\) and \(a_i\ge1\), these processing times are positive integers.
Finally, set the target makespan to
\[
        T=2mB+2m-2 .
\]

We show that the \textsc{3-Partition} instance is feasible if and only if the
constructed scheduling instance admits a schedule with makespan at most \(T\).

First, suppose that the items can be partitioned into triples
\[
        A_1,\ldots,A_m,
        \qquad
        \sum_{a_i\in A_j}a_i=B
        \quad (j=1,\ldots,m).
\]
For each machine \(j\), schedule the three jobs corresponding to \(A_j\)
consecutively, starting at time
\[
        L_j=2(j-1).
\]
If the three jobs on machine \(j\) are denoted by
\(J_{j,1},J_{j,2},J_{j,3}\), set
\[
        S_{j,1}=L_j,
        \qquad
        S_{j,h+1}=S_{j,h}+e_{j,h},
        \quad h=1,2.
\]
Thus, the jobs on each machine are processed without overlap. The total execution
length assigned to machine \(j\) is
\[
        \sum_{a_i\in A_j}2m a_i=2mB.
\]
Hence machine \(j\) completes at
\[
        L_j+2mB=2(j-1)+2mB.
\]
The largest completion time occurs on machine \(m\), and it is
\[
        2(m-1)+2mB=2mB+2m-2=T.
\]

It remains only to verify the server constraint. Every execution length is a
multiple of \(2m\). Hence all loading times on machine \(j\) are congruent to
\[
        2(j-1) \pmod{2m}.
\]
The unloading time of a job loaded at time \(S\) is
\[
        U=S+e_i-1.
\]
Therefore, all unloading times on machine \(j\) are congruent to
\[
        2(j-1)-1 \pmod{2m}.
\]
Thus, machine \(j\) uses one residue class modulo \(2m\) for its loadings and one
different residue class for its unloadings. Over all \(j=1,\ldots,m\), the
loading residues are
\[
        0,2,4,\ldots,2m-2,
\]
and the unloading residues are
\[
        2m-1,1,3,\ldots,2m-3.
\]
Together these are exactly the \(2m\) residue classes modulo \(2m\). Consequently
no two server operations coincide. This excludes loading-loading,
unloading-unloading, and loading-unloading collisions. The constructed schedule
is therefore feasible and has the makespan \(T\).

Conversely, suppose that the constructed scheduling instance has a feasible
schedule with
\[
        C_{\max}\le T.
\]
We prove that the original \textsc{3-Partition} instance has a feasible
partition.

No machine can process four jobs. Indeed, for any four jobs corresponding to
items \(a_{i_1},a_{i_2},a_{i_3},a_{i_4}\), we have
\[
        a_{i_1}+a_{i_2}+a_{i_3}+a_{i_4}>B.
\]
Since the item sizes are integral, this implies
\[
        a_{i_1}+a_{i_2}+a_{i_3}+a_{i_4}\ge B+1.
\]
The total execution length of the corresponding four jobs is therefore at least
\[
        2m(B+1)=2mB+2m.
\]
However, 
\[
        2mB+2m>T=2mB+2m-2.
\]
Thus, four jobs cannot be assigned to the same machine in any schedule with a 
makespan at most \(T\).

There are \(3m\) jobs and \(m\) machines. Since every machine processes at most
three jobs, every machine must process exactly three jobs.

Next, consider the total execution length assigned to any one machine. Each job
has an execution length divisible by \(2m\), and hence each machine load is a
multiple of \(2m\). No machine can have a load greater than \(2mB\). If a machine
load were larger than \(2mB\), then, being a multiple of \(2m\), it would be at
least
\[
        2m(B+1)=2mB+2m>T,
\]
which is impossible.

The total execution length of all jobs is
\[
        \sum_{i=1}^{3m} e_i
        =
        \sum_{i=1}^{3m}2m a_i
        =
        2m\cdot mB .
\]
Since there are \(m\) machines and each machine has a load at most \(2mB\), every
machine must have a load exactly equal to \(2mB\).

Therefore, each machine processes exactly three jobs, and the three jobs assigned
to any machine have total execution length \(2mB\). If these jobs correspond to
items \(a_i,a_j,a_k\), then
\[
        2m(a_i+a_j+a_k)=2mB,
\]
and hence
\[
        a_i+a_j+a_k=B.
\]
Therefore, the triples induced by the \(m\) machine assignments form a feasible
3-partition.

We have shown that the constructed scheduling instance has a schedule with a 
makespan at most \(T\) if and only if the original \textsc{3-Partition} instance
is feasible.

The transformation is polynomial, and all numerical values created by the
reduction are polynomially bounded in the numerical data of the
\textsc{3-Partition} instance. Since \textsc{3-Partition} is strongly
NP-complete, the reduction proves strong NP-hardness. Together with membership
in NP, the decision problem is strongly NP-complete.
\end{proof}


\section{Server Blocking Preliminaries}
\label{sec:server-blocking-preliminaries}

After the first \(k\) list insertions, the jobs
\(J_1,\ldots,J_k\) form the current LS prefix; we refer to them as prefix jobs.
For a partial schedule, let
$B=\{S_j,U_j:\ J_j \text{ is a prefix job}\}$
be the set of occupied server slots. Consider inserting a job \(J\) with
execution length \(e\). We call
\[
        d=e-1
\]
the loading-to-unloading offset of \(J\), since its unloading slot is exactly
\(d\) time units after its loading slot.
If a scheduling rule selects a machine whose current availability time is \(a\),
then a candidate loading time is an integer \(t\ge a\). The candidate is
server-feasible precisely when
\[
        t\notin B,
        \qquad
        t+d\notin B.
\]
List scheduling (LS) selects a machine with minimum current completion time and
chooses the earliest server-feasible candidate on that machine.

If a rejected candidate \(t\) satisfies \(t\in B\), we say that it has a
direct blocker. If it satisfies \(t+d\in B\), we say that it has a shifted
blocker. A shifted blocker is an already scheduled server operation at time
\(t+d\); it may be either a loading operation or an unloading operation.

Let \(S\) be the loading time assigned to the inserted job. The failed candidate
interval is
\[
        [a,S)_{\mathbb Z}=\{a,a+1,\ldots,S-1\}.
\]
If \(S=a\), this interval is empty. Figure~\ref{fig:blocking-concept} illustrates these notions.

\begin{figure}[H]
\centering
\begin{tikzpicture}[
    x=0.82cm,
    y=0.68cm,
    >=Stealth,
    font=\footnotesize,
    op/.style={draw=black, minimum width=0.64cm, minimum height=0.34cm, inner sep=0pt},
    old/.style={op, fill=black!22},
    new/.style={op, fill=white, very thick},
    note/.style={fill=white, inner sep=1pt}
]

\begin{scope}[shift={(0,4.5)}]
\node[font=\small] at (3.1,2.45) {(a) Direct blocking};

\draw[->,black!55] (-0.1,1.55) -- (6.4,1.55);
\draw[->,black!55] (-0.1,0.35) -- (6.4,0.35);

\node[anchor=east] at (-0.35,1.55) {occupied server slot};
\node[anchor=east] at (-0.35,0.35) {trial insertion};

\node[old] at (1.0,1.55) {\(B\)};
\node[new] at (1.0,0.35) {\(L\)};
\node[new] at (5.0,0.35) {\(U\)};

\node[below=1pt] at (1.0,0.03) {\(t\)};
\node[below=1pt] at (5.0,0.03) {\(t+d\)};

\draw[thick] (1.0,0.62) -- (1.0,1.28);
\node[font=\large] at (1.0,1.96) {\(\times\)};
\node[note,anchor=west] at (1.32,0.95) {\(t\in B\)};

\draw[<->,thick] (1.0,-0.65) -- (5.0,-0.65);
\node[below=2pt] at (3.0,-0.55) {\(d=e-1\)};
\end{scope}

\begin{scope}[shift={(0,0)}]
\node[font=\small] at (3.1,2.45) {(b) Shifted blocking};

\draw[->,black!55] (-0.1,1.55) -- (6.4,1.55);
\draw[->,black!55] (-0.1,0.35) -- (6.4,0.35);

\node[anchor=east] at (-0.35,1.55) {occupied server slot};
\node[anchor=east] at (-0.35,0.35) {trial insertion};

\node[old] at (5.0,1.55) {\(B\)};
\node[new] at (1.0,0.35) {\(L\)};
\node[new] at (5.0,0.35) {\(U\)};

\node[below=1pt] at (1.0,0.03) {\(t\)};
\node[below=1pt] at (5.0,0.03) {\(t+d\)};

\draw[thick,dashed] (5.0,0.62) -- (5.0,1.28);
\node[font=\large] at (5.0,1.96) {\(\times\)};
\node[note,anchor=west] at (5.32,0.95) {\(t+d\in B\)};

\draw[<->,thick] (1.0,-0.65) -- (5.0,-0.65);
\node[below=2pt] at (3.0,-0.55) {\(d=e-1\)};
\end{scope}

\end{tikzpicture}

\caption{Direct and shifted blocking of a candidate loading time. }
\label{fig:blocking-concept}
\end{figure}

In direct blocking, the candidate loading slot is already occupied, \(t\in B\). In shifted
blocking, the corresponding unloading slot is already occupied, \(t+d\in B\).
Thus, a rejected candidate belongs to \(B\cup(B-d)\).

\begin{lemma}
\label{lem:blocking-cover}
For every LS insertion, we have
\[
        [a,S)_{\mathbb Z}\subseteq B\cup(B-d),
\]
where
\[
        B-d=\{t-d:\ t\in B\}.
\]
\end{lemma}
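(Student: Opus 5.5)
The plan is to argue straight from the definition of the LS insertion rule, with no appeal to the lower bounds of Lemma~\ref{lem:lower-bounds}. Fix an LS insertion of a job \(J\) with execution length \(e\) and offset \(d=e-1\). LS first selects a machine whose current availability time is \(a\). On that machine it takes as loading time \(S\) the smallest integer \(t\ge a\) that is server-feasible, that is, the smallest such \(t\) with \(t\notin B\) and \(t+d\notin B\). Here \(B\) is the set of occupied server slots of the prefix jobs at the moment of insertion.

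If \(S=a\), the failed candidate interval \([a,S)_{\mathbb Z}\) is empty, and there is nothing to prove. Otherwise, take any \(t\in[a,S)_{\mathbb Z}\). Then \(t\ge a\) and \(t<S\), so \(t\) is a candidate on the selected machine that comes before the chosen loading time. Since \(S\) is the earliest server-feasible candidate, \(t\) is not server-feasible. Negating the feasibility condition gives \(t\in B\) or \(t+d\in B\).

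In the first case \(t\) has a direct blocker and lies in \(B\). In the second case \(t\) has a shifted blocker. Writing \(b=t+d\in B\), we get \(t=b-d\in B-d\). Hence every \(t\in[a,S)_{\mathbb Z}\) lies in \(B\cup(B-d)\), which is the claimed inclusion.

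The only point needing care is that \(S\) must be the minimum feasible candidate over all integers \(t\ge a\) on the selected machine. This holds because machine availability is fixed at \(a\) for the whole search and feasibility depends only on server slots. Such a minimum exists, since \(B\) is finite, so all sufficiently large \(t\) are feasible. With this established, the proof is just unwinding the definitions, and I do not expect a real obstacle.
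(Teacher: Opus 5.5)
Your proposal is correct and follows essentially the same argument as the paper. Every \(t\in[a,S)_{\mathbb Z}\) is a rejected candidate because \(S\) is the earliest server-feasible loading time, so \(t\in B\) or \(t+d\in B\), which is the same as \(t\in B\cup(B-d)\). Your remark that the minimum exists because \(B\) is finite is a harmless extra detail that the paper leaves implicit.
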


\begin{proof}
Let \(t\in[a,S)_{\mathbb Z}\). Since \(S\) is the earliest feasible candidate,
\(t\) is infeasible. Hence either \(t\in B\), or \(t+d\in B\), or both. This is
equivalent to \(t\in B\cup(B-d)\).
\end{proof}

The next observation is a simple consequence of the LS rule.

\begin{lemma}
\label{lem:ls-terminal-property-prelim}
Consider an LS prefix and let
$a=\min_i c_i$
be the minimum current machine completion time. If a prefix job has a machine
occupation slot or a server operation at some time \(t\ge a\), then that job is
terminal on its machine in the current prefix.
\end{lemma}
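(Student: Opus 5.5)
The plan is to argue by contradiction and use only two facts about LS: each new job is placed on a machine whose current completion time is minimal, and it is placed no earlier than that machine's availability time. Fix an LS prefix \(J_1,\ldots,J_k\) with machine completion times \(c_1,\ldots,c_m\), and let \(a=\min_i c_i\). Suppose a prefix job \(J_j\) on machine \(M\) has a machine occupation slot or a server operation at some time \(t\ge a\), but is not terminal on \(M\). Then some later job \(J_\ell\) with \(\ell>j\) in list order is also on \(M\), and since jobs on a machine never overlap, \(S_\ell\ge C_j\).

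The key step is to compare \(S_\ell\) with \(a\) through the insertion of \(J_\ell\).

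\begin{itemize}
\item When \(J_\ell\) was inserted, LS chose \(M\) as a machine of minimum completion time among the jobs inserted so far. Call that minimum \(a_\ell\).
\item Machine completion times only increase as the prefix grows, so the minimum over machines is nondecreasing. Hence \(a_\ell\le a\).
\item Every job already on \(M\) at that moment, in particular \(J_j\), therefore satisfies \(C_j\le a_\ell\le a\).
\end{itemize}

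It remains to check that every relevant time of \(J_j\) lies strictly before \(C_j\).

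\begin{itemize}
\item The occupation slots of \(J_j\) are the integers \(S_j,\ldots,C_j-1\).
\item Its server operations are at \(S_j\) and \(U_j=C_j-1\).
\end{itemize}

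So every such time \(t\) satisfies \(t\le C_j-1<C_j\le a\). This contradicts \(t\ge a\), so \(J_j\) must be terminal on \(M\).

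The step needing the most care is the monotonicity claim \(a_\ell\le a\). The minimum machine completion time never decreases, because each insertion only appends a job to a machine after that machine's current completion time. This is exactly where the LS rule, with its start time \(t\ge a\) on the chosen machine, is used. The remaining subtlety is interpreting "slot at time \(t\)" as the unit interval \([t,t+1)\), which is what makes the inequality \(t\le C_j-1\) the right one. I would also remark that the argument does not depend on how ties among minimal machines are broken.
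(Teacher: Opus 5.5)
Your proof is correct and is essentially the paper's argument. The paper shows that every earlier insertion had selected-machine availability at most \(a\) by fixing a machine whose final completion time is \(a\) and noting that its completion time never exceeded \(a\); this is the same fact as your monotonicity bound \(a_\ell\le a\), and both proofs then reach the contradiction at the insertion of the successor on the same machine, using \(t\le C_j-1\) for every occupation slot and server operation of \(J_j\).
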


\begin{proof}
Choose a machine whose current completion time in the prefix is \(a\). During
the construction of the prefix, the completion time of this machine never
exceeded its final prefix value \(a\). Hence, before each earlier insertion,
there was at least one machine with completion time at most \(a\). Since LS
always selects a machine with minimum current completion time, no earlier job
could have been inserted with selected-machine availability strictly larger than
\(a\).

Now suppose that a prefix job \(J\) has a machine occupation slot or a server
operation at some time \(t\ge a\), but is not terminal on its machine. Let \(J'\)
be its successor on the same machine. Then \(C_J>a\), so \(J'\) was inserted
with machine availability \(C_J>a\), contradicting the preceding paragraph.
\end{proof}

For an integer slot \(t\), let \(M(t)\) denote the number of jobs occupying
machines during the interval \([t,t+1)\), and let
\[
        \chi(t)=
        \begin{cases}
        1, & \text{if the server is busy in slot }t,\\
        0, & \text{otherwise}.
        \end{cases}
\]
Define the aggregate resource profile
\[
        R(t)=M(t)+\chi(t).
\]
For a complete schedule, we have
\[
        \sum_t R(t)=E+2n,
\]
because each job contributes \(e_j\) machine-occupation units and two
server-operation units.



\section{A \(5/2\)-Approximation Bound for Algorithm LS on Three Machines}
\label{sec:ls-three-machines-five-halves}

We now specialise the preceding notation and blocking preliminaries to the case
\(m=3\). We use the integral-time convention, the offset notation
\(d_j=e_j-1\ge2\), and the aggregate resource profile
\[
        R(t)=M(t)+\chi(t)
\]
defined in Section~\ref{sec:server-blocking-preliminaries}. For a complete
schedule, we have 
\[
        \sum_t R(t)=E+2n .
\]

The proof relies on a structural property specific to the three-machine case:
after shifting the minimum current machine completion time to zero, at most two
previously scheduled terminal jobs can contribute relevant non-negative operations.
\subsection{Two-terminal structure}

Consider an LS prefix on three machines. Let
\[
        a=\min_i c_i
\]
be the minimum current machine completion time and shift time so that \(a=0\).
By Lemma~\ref{lem:ls-terminal-property-prelim}, any prefix job with a
machine-occupation slot or a server operation at a non-negative time is terminal
on its machine. Since one machine has completion time zero, at most two
previously scheduled terminal jobs can contribute such a non-negative occupation or server operation.

If such terminal jobs exist, denote them by \(X\) and \(Y\), ordered by their LS
insertion order. We write
\[
        L_X,\quad U_X=L_X+d_X,
        \qquad
        L_Y,\quad U_Y=L_Y+d_Y,
\]
where \(d_X,d_Y\ge2\).

\begin{lemma}[Two-terminal endpoint structure]
\label{lem:two-terminal-endpoint-three}
In the shifted prefix described above, we have
\[
        L_X\le0\le U_X .
\]
If \(Y\) exists and \(L_Y>0\), then
\[
        L_Y\le2 .
\]
Moreover,
\[
        L_Y=2
        \quad\Longrightarrow\quad
        L_X=0,\qquad U_X=d_Y+1 .
\]
\end{lemma}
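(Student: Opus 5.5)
The plan is to argue directly from the LS rule at the two insertion moments of \(X\) and \(Y\), using Lemma~\ref{lem:ls-terminal-property-prelim} to control which server slots are occupied at non-negative times, and Lemma~\ref{lem:blocking-cover} to control which candidates can be rejected.

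\textbf{Step 1: facts common to both insertions.} Call the machine with current completion time \(0\) the \emph{zero machine}. Its completion time never exceeded \(0\) during the construction. Hence, whenever \(X\) or \(Y\) was inserted, the selected machine had availability at most \(0\). Consequently the candidate \(t=0\) was actually examined, and so were all candidates \(t=1,2,\dots\) up to the chosen loading time.

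Next, suppose some job had a server operation at a time \(t\ge0\) at one of these earlier moments. That operation persists in the final prefix, so by Lemma~\ref{lem:ls-terminal-property-prelim} the job is terminal in the current prefix. Jobs on the zero machine complete by time \(0\), so their unloading slots are at most \(-1\). Therefore the only server operations at non-negative times come from \(X\) and \(Y\). Thus at \(X\)'s insertion, \(B\cap[0,\infty)=\emptyset\). At \(Y\)'s insertion, \(B\cap[0,\infty)\subseteq\{L_X,U_X\}\).

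\textbf{Step 2: the claim for \(X\).} Since \(X\) has some occupation or server operation at a time \(\ge0\), we get \(C_X\ge1\), i.e. \(U_X\ge0\). For \(L_X\le0\), consider the candidate \(t=0\) at \(X\)'s insertion. It is neither in \(B\) nor shifted-blocked, since \(0\) and \(d_X\) are non-negative and \(B\) has no non-negative slots. So LS accepts a loading time \(\le0\).

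\textbf{Step 3: the claim for \(Y\) with \(L_Y>0\).} Now the candidate \(t=0\) was rejected. For \(t\ge1\) we have \(t+d_Y\ge3>0\ge L_X\). So such a \(t\) is blocked only if \(t=U_X\) or \(t=U_X-d_Y\) (the latter being a shifted blocker). Suppose both \(1\) and \(2\) were blocked. Then one of them equals \(U_X\) and the other equals \(U_X-d_Y\). This forces \(d_Y\le1\), contradicting \(d_Y\ge2\). Hence \(L_Y\le2\).

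\textbf{Step 4: the case \(L_Y=2\).} Here \(t=1\) is blocked, so \(U_X=1\) or \(U_X=1+d_Y\). If \(U_X=1\), then \(L_X=U_X-d_X\le-1\), so \(0\notin B\); also \(d_Y\ne U_X\) because \(d_Y\ge2\). Thus \(t=0\) would be feasible, a contradiction. Hence \(U_X=d_Y+1\).

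Finally, \(t=0\) was rejected, so \(0\in B\) or \(d_Y\in B\). We have \(d_Y\ne U_X\), and \(d_Y\ge2>0\ge L_X\), so \(d_Y\notin B\). Also \(U_X=d_Y+1\ge3\), so \(0\neq U_X\). Therefore \(0\in B\) forces \(L_X=0\).

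\textbf{Main obstacle.} The delicate point is Step 1: justifying rigorously that at the earlier insertion times no other job occupies a non-negative server slot, and that the candidate \(0\) was genuinely tested. Both are needed to invoke Lemma~\ref{lem:blocking-cover} on the range \(t\ge0\). I would handle this with the monotonicity argument from the proof of Lemma~\ref{lem:ls-terminal-property-prelim}, applied to operations that persist into the final prefix.
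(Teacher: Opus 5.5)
Your proof is correct and follows essentially the same route as the paper. At $X$'s insertion, candidate $0$ is feasible because no non-negative server slot is yet occupied. At $Y$'s insertion, the case analysis rests on the fact that the only non-negative blocking positions are $0$ (if $L_X=0$), $U_X$ and $U_X-d_Y$. Your Step~1 spells out why $B\cap[0,\infty)\subseteq\{L_X,U_X\}$, which the paper leaves implicit. Your Step~3, showing that candidates $1$ and $2$ cannot both be blocked, is a slightly tidier version of the paper's argument over the three candidates $0,1,2$.
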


\begin{proof}
Since \(X\) has a non-negative machine-occupation slot or server operation, we
have \(U_X\ge0\). Suppose, for contradiction, that \(L_X>0\). When \(X\) was
inserted, its selected-machine availability was at most zero by the same 
argument for Algorithm LS as in Lemma~\ref{lem:ls-terminal-property-prelim}. Moreover, by the
choice of \(X\), no previously scheduled terminal job has a non-negative server operation.
Thus, neither the candidate loading slot \(0\) nor the candidate unloading slot
\(d_X\) was occupied by an earlier job. Loading \(X\) at time \(0\) would have
been server-feasible, contradicting the earliest-feasible choice of LS. Hence
\(L_X\le0\) and therefore, \(L_X\le0\le U_X\).

Now suppose that \(Y\) exists and \(L_Y>0\). When \(Y\) was inserted, its
selected-machine availability was at most zero. Every non-negative candidate
\[
        0,1,\ldots,L_Y-1
\]
was therefore blocked by an earlier server endpoint. By
Lemma~\ref{lem:ls-terminal-property-prelim}, the only previously scheduled job with possible
non-negative endpoints is \(X\). Hence a non-negative candidate \(\tau\) can be
blocked only if
\[
        \tau\in\{L_X,U_X\}
        \qquad\text{or}\qquad
        \tau+d_Y\in\{L_X,U_X\}.
\]
Since \(L_X\le0\), the possible non-negative blocking positions are
\[
        0 \quad\text{if }L_X=0,\qquad U_X,\qquad U_X-d_Y .
\]
The last two positions differ by \(d_Y\ge2\). Therefore, the three consecutive
candidates \(0,1,2\) cannot all be blocked, and so \(L_Y\le2\).

Finally assume \(L_Y=2\). Candidates \(0\) and \(1\) were both blocked when
\(Y\) was inserted. Candidate \(1\) cannot be blocked by \(L_X\), since
\(L_X\le0\). If candidate \(1\) were blocked directly by \(U_X=1\), then
\(U_X-d_Y\le -1\). Candidate \(0\) would then have to be blocked by \(L_X=0\),
which would imply \(d_X=U_X-L_X=1\), contradicting \(d_X\ge2\). Hence candidate
\(1\) is shifted-blocked by
\[
        U_X-d_Y=1,
\]
and so \(U_X=d_Y+1\). With this value of \(U_X\), candidate \(0\) cannot be
blocked by \(U_X\) or by \(U_X-d_Y\). Thus, candidate \(0\) must be blocked by
\(L_X=0\).
\end{proof}

Figure~\ref{fig:three-machine-boundary-case} illustrates only the boundary case
\(L_Y=2\) in the last part of Lemma~\ref{lem:two-terminal-endpoint-three}. In
this boundary case, the failed candidates before \(Y\) is loaded are \(0\) and
\(1\); the time \(2=L_Y\) is the selected loading time of \(Y\). Before \(Y\) is loaded, candidates \(t=0\) and \(t=1\) must be blocked.
Candidate \(t=0\) is directly blocked by \(L_X=0\), while candidate \(t=1\) is
shift-blocked by \(U_X\). Therefore, \(1+d_Y=U_X\), equivalently
\(U_X=d_Y+1\).

\begin{figure}[H]
\centering
\begin{tikzpicture}[
    x=1.2cm,
    y=1.0cm,
    >=Stealth,
    font=\sffamily,
    machine/.style={draw=black!70, fill=black!5, minimum height=0.6cm, inner sep=0pt, rounded corners=1pt},
    activejob/.style={draw=black!80, fill=blue!15, minimum height=0.5cm, inner sep=0pt},
    endpoint/.style={draw=black, fill=red!70, circle, minimum size=0.15cm, inner sep=0pt},
    candidate/.style={draw=black!60, fill=black!5, rectangle, minimum width=0.8cm, minimum height=0.4cm, inner sep=0pt},
    blocked/.style={draw=red!80, fill=red!10, rectangle, minimum width=0.8cm, minimum height=0.4cm, inner sep=0pt},
    guide/.style={dashed, black!35},
    axis/.style={->, thick, black!70},
    blocklabel/.style={fill=white, inner sep=1.5pt, font=\scriptsize, text=red!80!black}
]

\draw[axis] (-1, -1.5) -- (8, -1.5) node[right, text=black] {time \(t\)};
\foreach \x in {0,1,2,3,4,5,6,7} {
    \draw (\x, -1.4) -- (\x, -1.6);
    \node[below, font=\small] at (\x, -1.6) {\x};
    \draw[guide] (\x, -1.2) -- (\x, 3.5);
}

\node[anchor=east, font=\bfseries] at (-1, 3) {Machine 1};
\draw[machine] (-1, 2.7) rectangle (0, 3.3);
\node at (-0.5, 3) {Prefix};
\node[anchor=south, font=\small, text=blue!80!black] at (0, 3.3) {Min available (\(a=0\))};
\draw[thick, blue!80!black] (0, 2.6) -- (0, 3.4);

\node[anchor=east, font=\bfseries] at (-1, 1.8) {Machine 2};
\draw[activejob] (-1, 1.55) rectangle (4, 2.05);

\node[endpoint] (LX) at (0, 1.95) {};
\node[endpoint] (UX) at (4, 1.8) {};
\node[above left, font=\small] at (LX) {\(L_X=0\)};
\node[above right, font=\small] at (UX) {\(U_X=d_Y+1\)};

\node[anchor=east, font=\bfseries] at (-1, 0.6) {Machine 3};
\draw[activejob] (2, 0.35) rectangle (5, 0.85);

\node[endpoint] (LY) at (2, 0.6) {};
\node[endpoint] (UY) at (5, 0.6) {};
\node[above left, font=\small] at (LY) {\(L_Y=2\)};
\node[above right, font=\small] at (UY) {\(U_Y=L_Y+d_Y\)};

\node[anchor=east, font=\bfseries] at (-1, -0.6) {Candidates};

\node[blocked] (C0) at (0, -0.6) {\(\times\)};
\node[below, font=\footnotesize, text=red!80!black] at (0, -0.8) {\(t=0\)};

\node[blocked] (C1) at (1, -0.6) {\(\times\)};
\node[below, font=\footnotesize, text=red!80!black] at (1, -0.8) {\(t=1\)};

\node[candidate] (C2) at (2, -0.6) {\(\checkmark\)};
\node[below, font=\footnotesize, text=black!60] at (2, -0.8) {\(t=2=L_Y\)};

\draw[->, red!80, thick, shorten >=2pt, shorten <=2pt] (LX) -- (C0);
\node[blocklabel] at (0.2, 1.2) {Direct block \(L_X=0\)};

\draw[->, red!80, thick, dashed, shorten >=2pt, shorten <=2pt]
    (UX) .. controls (4, -0.2) and (1.5, -0.2) .. (C1.east);

\node[
    font=\scriptsize,
    text=red!80!black,
    align=left,
    anchor=west,
    fill=white,
    inner sep=2pt
] at (4.2, -0.7) {
    Shifted block for \(t=1\):\\
    \(1+d_Y=U_X\), hence \(U_X=d_Y+1\).
};

\end{tikzpicture}

\caption{Boundary configuration \(L_Y=2\) in the two-terminal endpoint lemma.
}
\label{fig:three-machine-boundary-case}
\end{figure}

\subsection{Density estimates}

\begin{lemma}[Local blocking density]
\label{lem:local-blocking-density-three}
Consider one LS insertion on three machines. Shift time so that the selected
machine becomes available at time \(0\). Let the inserted job have offset
\(d=e-1\ge2\), and let \(S\) be its LS loading time. Let \(R(t)\) be the
resource profile of the partial schedule before this insertion. Then
\[
        \sum_{t=0}^{S-1}R(t)\ge2S-1 .
\]
Equivalently, before shifting, if the selected-machine availability is \(a\),
then
\[
        \sum_{t=a}^{S-1}R(t)\ge2(S-a)-1 .
\]
\end{lemma}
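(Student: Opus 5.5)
The plan is to reduce the statement to a finite configuration analysis, using the two-terminal structure. After shifting so that the selected machine becomes available at time \(0\), LS selects a machine of minimum completion time. So \(0=\min_i c_i\) in the current prefix, and the setting of Lemma~\ref{lem:two-terminal-endpoint-three} applies. Only the at most two terminal jobs \(X\) and \(Y\) on the other two machines can have machine occupation or server operations at non-negative times. Hence, on \([0,S)\),
\[
R(t)=[\,t\in[\max(L_X,0),C_X)\,]+[\,t\in[\max(L_Y,0),C_Y)\,]+\chi(t),
\]
with \(\chi(t)=1\) exactly when \(t\in\{L_X,U_X,L_Y,U_Y\}\). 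If \(S=0\) there is nothing to prove, so assume \(S\ge1\).

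The first step is to pin down the possible blocked sets. By Lemma~\ref{lem:blocking-cover}, every \(t\in[0,S)\) lies in \(B\cup(B-d)\), and only the non-negative endpoints of \(X\) and \(Y\) are relevant. By Lemma~\ref{lem:two-terminal-endpoint-three}, these endpoints are \(L_X\) (only if \(L_X=0\)), \(U_X\ge0\), \(L_Y\le2\) and \(U_Y\). Each endpoint \(b\) blocks at most the two candidates \(b\) and \(b-d\). The candidates \(0,1,\ldots,S-1\) form a consecutive run, so \(S\) is bounded by a small constant (at most \(8\)). This leaves a finite list of patterns, distinguished by which endpoints are direct or shifted blockers of which candidate.

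The second step is a base estimate of one unit per slot. Every blocked \(t\) satisfies \(t\le b\) for some endpoint \(b\in\{U_X,L_Y,U_Y\}\), or \(t=L_X=0\). In each case a machine of \(X\) or \(Y\) is occupied at \(t\). For the case \(t\le U_X\) this uses \(L_X\le0\). When \(t<L_Y\), I expect the run structure to force \(t\le U_X\). So \(M(t)\ge1\) on \([0,S)\), which gives \(\sum_{t<S}R(t)\ge S\).

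The third step, which is the main obstacle, is to produce \(S-1\) further units. These come from server endpoints lying inside \([0,S)\) and from slots where both \(X\) and \(Y\) occupy machines. The idea is a charging scheme. A direct blocker \(t=b\) is itself a server-busy slot inside the window, which gives an extra unit at \(t\). A shifted blocker \(t=b-d\) with \(d\ge2\) means the blocking job is still running after \(t\). I would try to show that such a \(t\) lies in the overlap of \(X\) and \(Y\), or is paired with a direct blocker nearby. Because \(d\ge2\), the same endpoint cannot shift-block two consecutive candidates, so at most one slot per run should fail to be charged. That slot is the source of the \(-1\).

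I would first treat the case where \(Y\) does not exist or \(L_Y\le0\); there \(S\le2\) and the check is immediate. Then I would treat \(0<L_Y\le2\), using the boundary identity \(U_X=d_Y+1\) when \(L_Y=2\) to control the overlap of \(X\) and \(Y\). If the uniform charging argument proves delicate, I would fall back on enumerating the finitely many blocking patterns. In each pattern I would verify \(\sum_{t<S}R(t)\ge2S-1\) directly. The unshifted form of the inequality then follows by translating time by \(a\).
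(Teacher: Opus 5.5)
Your architecture (one resource unit per slot from an active terminal job, then a charging scheme that leaves at most one deficient slot) matches the paper's. However, the step you call the main obstacle is left open, and the claims you make around it do not hold.

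First, the assertion that \(S\le2\) when \(L_Y\le0\) is false whenever \(Y\) exists. If both terminal jobs have started by time \(0\), the possible non-negative blocking positions are \(0\) (if \(L_X=0\) or \(L_Y=0\)), \(U_X\), \(U_X-d\), \(U_Y\) and \(U_Y-d\). These can cover a run of length five. Concretely, LS on jobs with execution lengths \(3,6,5,3\) (in this list order) loads them at times \(0,1,3,8\). At the last insertion \(a=3\), and after shifting one has \(L_X=-2\), \(U_X=3\), \(L_Y=0\), \(U_Y=4\), \(d=2\). Candidates \(0,3,4\) are blocked directly and \(1,2\) are shift-blocked by \(U_X\) and \(U_Y\), so \(S=5\). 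This case is therefore not immediate.

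Second, the fact that a single endpoint cannot shift-block two consecutive candidates does not bound the number of uncharged slots. Consecutive candidates can be shift-blocked by different endpoints, as candidates \(1,2\) are above. Also, your ``pairing with a nearby direct blocker'' needs a slot carrying two surplus units. A count of one server unit per directly blocked slot does not supply that.

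The missing ingredient is a structural description of the slots with \(R(t)=1\). Such a slot has exactly one active terminal job and an idle server.

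If \(t\ge\max(L_Y,0)\), the inactive terminal job has already completed, and the active job was loaded before \(t\). So \(t\) can only be shift-blocked by the active job's unloading \(U=t+d\). Since \(d\ge2\), none of these endpoints blocks \(t+1\). Hence \(t+1\) is feasible and \(t=S-1\).

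If \(t<L_Y\), Lemma~\ref{lem:two-terminal-endpoint-three} gives \(L_Y\le2\), and \(L_X=0\) when \(L_Y=2\). This leaves only \(t=L_Y-1\). Then slot \(L_Y\) contains \(X\), \(Y\) and the loading of \(Y\). Here \(X\) is still active, because \(U_X\ne0\) when slot \(0\) is sparse, and \(U_X=d_Y+1\) when \(L_Y=2\). So \(R(L_Y)\ge3\) pays for the deficit.

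Together with \(R(t)\ge1\) on \([0,S)\), at most one uncompensated deficit remains, which gives \(\sum_{t<S}R(t)\ge2S-1\). This is exactly the paper's sparse-slot charging. Your fallback enumeration could in principle replace it, but it is not carried out, and there are more configurations than your case split suggests.
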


\begin{proof}
The case \(S=0\) is trivial. Assume \(S>0\). By
Lemma~\ref{lem:blocking-cover}, each candidate \(t=0,\ldots,S-1\) is blocked by
a direct or shifted server conflict:
\[
        t\in B
        \qquad\text{or}\qquad
        t+d\in B .
\]
By Lemma~\ref{lem:ls-terminal-property-prelim}, all previously scheduled jobs relevant to these
blocking conditions are terminal. Since one machine has completion time zero, at most
two such terminal jobs exist.

If no previously scheduled terminal job has a non-negative occupation slot or endpoint, then no
candidate \(t\ge0\) can be blocked, and candidate \(0\) is feasible. Hence this
case cannot occur when \(S>0\).

Suppose first that exactly one previously scheduled terminal job \(X\) is relevant. Then
\(L_X\le0\le U_X\). The only possible non-negative blocking positions are
\[
        0 \quad\text{if }L_X=0,\qquad U_X-d,\qquad U_X .
\]
The positions \(U_X-d\) and \(U_X\) differ by \(d\ge2\). Hence a consecutive
blocked interval starting at \(0\) has a length at most two. If \(S=1\), then
\(R(0)\ge1=2S-1\). If \(S=2\), candidates \(0\) and \(1\) must both be blocked,
which forces \(L_X=0\) and \(U_X-d=1\). Thus, \(R(0)\ge2\) and \(R(1)\ge1\),
giving \(R(0)+R(1)\ge3=2S-1\).

It remains to consider two relevant terminal jobs \(X\) and \(Y\), ordered as in
Lemma~\ref{lem:two-terminal-endpoint-three}. We first show that no slot in
\([0,S)\) has resource zero. Before \(Y\) starts, this follows from
Lemma~\ref{lem:two-terminal-endpoint-three}: if \(L_Y=1\), then \(X\) is active
at slot \(0\); if \(L_Y=2\), then \(L_X=0\) and \(U_X=d_Y+1\), so \(X\) is
active at slots \(0\) and \(1\). After \(Y\) starts, if both terminal jobs had
completed before a blocked candidate \(t<S\), then neither \(t\) nor \(t+d\)
could be an endpoint of \(X\) or \(Y\), contradicting the fact that \(t\) is
blocked. Therefore,
\[
        R(t)\ge1,\qquad t=0,\ldots,S-1 .
\]

We call a slot \(t\in[0,S)\) sparse if \(R(t)=1\). A sparse slot contains no server
operation; otherwise the same job would also be machine-active in that slot,
giving \(R(t)\ge2\). Hence a sparse slot must be shifted-blocked.

Every non-final sparse slot is followed by a slot of resource at least \(3\).
Before \(Y\) starts, the only possible sparse slot is \(0\) when \(L_Y=1\), or
\(1\) when \(L_Y=2\). In either case, the following slot contains the loading of
\(Y\), while \(X\) remains active; hence that slot has resource value at least \(3\).
After \(Y\) starts, a sparse slot has exactly one of \(X,Y\) active. Its shifted
blocker must be the unloading endpoint of that active terminal job. Since
\(d\ge2\), the same endpoint cannot block the next candidate. The other terminal
job has already completed, so the next candidate would be feasible unless the
sparse slot is \(S-1\).

Thus, every non-final sparse slot is charged injectively to a following slot of
resource at least \(3\), and at most the final sparse slot remains uncharged.
Relative to the target average of two resource units per slot, each sparse
slot creates a deficit of one, each charged successor contributes a surplus of
at least one, and at most one deficit is left uncharged. Hence
\[
        \sum_{t=0}^{S-1}R(t)\ge2S-1 .
\]
\end{proof}

\begin{lemma}[Local minimum-increase density]
\label{lem:local-minimum-increase-density-three}
Consider one LS insertion on three machines. Let \(a\) and \(a'\) be the minimum
machine completion times before and after the insertion. Let \(R_{\rm new}(t)\)
be the aggregate resource profile after the insertion. Then
\[
        \sum_{t=a}^{a'-1}R_{\rm new}(t)\ge2(a'-a).
\]
\end{lemma}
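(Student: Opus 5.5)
We prove Lemma~\ref{lem:local-minimum-increase-density-three} by splitting the window \([a,a')\) into the failed-candidate interval and the part covered by the inserted job, and applying Lemma~\ref{lem:local-blocking-density-three} to the former. Shift time so that \(a=0\). The selected machine has availability \(0\). Let \(J\) be the inserted job with loading time \(S\ge0\) and completion time \(C=S+e\). If \(a'=a\) there is nothing to prove, so assume \(a'>0\). This forces the selected machine to be the unique machine with completion time \(0\) before the insertion. Its new completion time is \(C\), so \(a'=\min\{C,c_2,c_3\}\le C\), where \(c_2,c_3\ge0\) are the other two completion times.

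\emph{Failed candidates.} On \([0,S)\) we use Lemma~\ref{lem:local-blocking-density-three}. Since \(R_{\rm new}\ge R\) pointwise, it gives \(\sum_{t=0}^{S-1}R_{\rm new}(t)\ge 2S-1\). This covers the window if \(a'\le S\), but that case cannot occur, because \(a'\le S\) contradicts \(a'>0\) unless \(S\ge a'\). So we must handle the remaining deficit of one unit.

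\emph{The job's own interval.} On \([S,a')\), with \(a'\le C\), the inserted job occupies the selected machine in every slot. It also contributes server units at \(S\) and (if \(a'=C\)) at \(U=C-1\). Moreover, every slot \(t<a'\le\min(c_2,c_3)\) has the other two machines still busy up to their completion times. Those machines need not be occupied, however, because idle gaps may exist where earlier jobs were delayed. The key observation is Lemma~\ref{lem:ls-terminal-property-prelim}: after the insertion the minimum is \(a'\), so any slot \(t<a'\) on another machine lies before that machine's completion time. A gap on that machine at \(t\ge0\) would precede a job inserted with availability at least \(t\ge0\). By the LS argument, such a job was inserted while the selected machine had availability \(\le0\) and was not chosen only if its own availability was also \(\le 0\). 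Hence its availability was exactly \(0\) (ties), and the gap is the failed interval of that earlier insertion.

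\emph{Main obstacle.} This tie/gap situation is the main obstacle. For the clean case with no gaps at nonnegative times on the other two machines, \(M(t)\ge 2\) for \(t\in[S,a')\) (two machines busy, counting the new job and at least one other; in fact all three machines are busy when \(t<\min(c_2,c_3)\)), so \(R_{\rm new}(t)\ge3\) on \([S,a')\). The slot \(S\) carries the loading of \(J\), giving \(R_{\rm new}(S)\ge 4\), which absorbs the deficit of one from the failed interval. Summing gives \(2S-1+1+2(a'-S)=2a'\). On the failed interval \([0,S)\) itself, the new job contributes nothing, so the bound there comes solely from Lemma~\ref{lem:local-blocking-density-three}. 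For the gap case I would first try to show, using the two-terminal structure of Lemma~\ref{lem:two-terminal-endpoint-three}, that gaps at nonnegative times on the other machines have length at most two. I would then show that each such gap slot still carries the server operation or the machine occupation of the terminal job \(X\) or \(Y\), so that \(R_{\rm new}\ge2\) there. If that fails, the fallback is to charge each gap slot against the surplus slot \(S\) together with the loading slots of \(X\) and \(Y\), each of which has \(R\ge3\).
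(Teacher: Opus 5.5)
There is a genuine gap. You dismiss the case \(a'\le S\) with a circular sentence, but this case does occur, and it is exactly the second case of the paper's proof. Here is an example. Start with three empty machines and insert \(X\) with \(e_X=4\): it is loaded at \(0\), unloaded at \(3\), and completes at \(4\). Next insert \(Y\) with \(e_Y=3\): candidates \(0,1\) are blocked, so \(L_Y=2\), \(U_Y=4\), \(C_Y=5\). Finally insert \(Z\) with \(e_Z=3\) on the empty machine. The occupied slots are \(\{0,2,3,4\}\), candidates \(0,\dots,4\) are all blocked, and \(S=5\), whereas \(a'=\min\{4,5,8\}=4<S\).

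In this case Lemma~\ref{lem:local-blocking-density-three} cannot be used as a black box. Its bound \(2S-1\) on the whole failed interval \([0,S)\) gives nothing for the prefix \([0,a')\). The paper handles the case by reopening the sparse-slot charging inside \([0,a')\). Every sparse slot \(t<a'-1\) is charged to \(t+1\), which still lies in the window. The slot \(a'-1\) cannot be sparse: the job attaining the new minimum \(a'\) is necessarily an old terminal job, since \(C=S+e>S\ge a'\). That job unloads in slot \(a'-1\) while still occupying its machine, so \(R_{\rm new}(a'-1)\ge2\) and no uncharged deficit remains. Your proposal has no substitute for this last-slot observation.

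In the case \(S<a'\) your decomposition matches the paper's. However, the step you call the main obstacle is left as a list of tentative strategies, and none of them is needed. The two other machines end with terminal jobs \(X,Y\) whose completion times are at least \(a'\). By Lemma~\ref{lem:two-terminal-endpoint-three}, the earlier-inserted one satisfies \(L_X\le0\), so \(X\) alone occupies its machine throughout \([0,a')\). Together with the inserted job on \([S,a')\), this gives \(M_{\rm new}(t)\ge2\) there. Adding the server unit at \(S\) yields \(\sum_{t=S}^{a'-1}R_{\rm new}(t)\ge2(a'-S)+1\), which absorbs the deficit of one from \([0,S)\). You do not need \(R_{\rm new}\ge3\) on \([S,a')\). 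Idle gaps on \(Y\)'s machine, which can only be the slots before \(L_Y\in\{1,2\}\), play no role.
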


\begin{proof}
The result is trivial if \(a'=a\). Suppose \(a'>a\), shift time so that \(a=0\),
and put \(T=a'>0\). The selected machine was the unique minimum before the
insertion; otherwise the minimum would remain zero after the insertion.

Let \(Z\) be the inserted job, and let its loading time be \(S_Z\). The two
other machines end with terminal jobs, say \(X\) and \(Y\), with completion
times at least \(T\).

First suppose \(S_Z<T\). Before the insertion, the partial schedule satisfies
\[
        \sum_{t=0}^{S_Z-1}R_{\rm old}(t)\ge2S_Z-1
\]
by Lemma~\ref{lem:local-blocking-density-three}. In the new schedule, on the
interval \([S_Z,T)\), the job \(Z\) is active. Also \(X\) is active throughout
\([0,T)\), by Lemma~\ref{lem:two-terminal-endpoint-three}. Hence every slot in
\([S_Z,T)\) has at least two units of resource, and the loading slot \(S_Z\) has
one additional server unit. Therefore, 
\[
        \sum_{t=S_Z}^{T-1}R_{\rm new}(t)\ge 2(T-S_Z)+1 .
\]
Combining the two estimates gives
\[
        \sum_{t=0}^{T-1}R_{\rm new}(t)\ge 2T .
\]

Now suppose \(S_Z\ge T\). Then every candidate \(t=0,\ldots,T-1\) was infeasible
for \(Z\). The sparse-slot charging used in
Lemma~\ref{lem:local-blocking-density-three} remains inside \([0,T)\), except
possibly for a sparse slot at \(T-1\): every sparse slot \(t<T-1\) is charged to
\(t+1\), which also lies in \([0,T)\). The final slot \(T-1\) cannot be sparse
in the new schedule. Indeed, some job attains the new minimum completion time
\(T\); this job unloads in slot \(T-1\) and is machine-active there, so
\(R_{\rm new}(T-1)\ge2\). Hence no uncharged sparse deficit remains, and
\[
        \sum_{t=0}^{T-1}R_{\rm new}(t)\ge2T .
\]

Returning to the original time scale gives the claim.
\end{proof}

\begin{lemma}[Minimum-frontier density]
\label{lem:minimum-frontier-density-three}
For any LS prefix on three machines, let
\[
        a=\min_i c_i
\]
be the minimum current machine completion time, and let \(R(t)\) be the resource
profile of that prefix. Then
\[
        \sum_{t=0}^{a-1}R(t)\ge2a .
\]
\end{lemma}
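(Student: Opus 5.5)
We prove Lemma~\ref{lem:minimum-frontier-density-three} by induction on the number \(k\) of list insertions, using Lemma~\ref{lem:local-minimum-increase-density-three} as the inductive step. We assume time is normalised so that the first server operation, and the availability of all machines in the empty prefix, is at time \(0\). Thus \(a=0\) for the empty prefix, and the claim \(\sum_{t=0}^{-1}R(t)\ge0\) holds trivially.

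For the inductive step, let \(a_k\) and \(a_{k+1}\) be the minimum machine completion times before and after the \((k+1)\)-st insertion. Let \(R_k\) and \(R_{k+1}\) be the corresponding profiles. We split the interval \([0,a_{k+1})\) into \([0,a_k)\) and \([a_k,a_{k+1})\). On the second piece, Lemma~\ref{lem:local-minimum-increase-density-three} gives directly
\[
\sum_{t=a_k}^{a_{k+1}-1}R_{k+1}(t)\ge 2(a_{k+1}-a_k).
\]
On the first piece we use monotonicity. Inserting a job only adds machine occupation and server operations and never removes any, so \(R_{k+1}(t)\ge R_k(t)\) for every slot \(t\). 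Combined with the induction hypothesis this gives
\[
\sum_{t=0}^{a_k-1}R_{k+1}(t)\ge\sum_{t=0}^{a_k-1}R_k(t)\ge 2a_k.
\]
Adding the two estimates yields \(\sum_{t=0}^{a_{k+1}-1}R_{k+1}(t)\ge 2a_{k+1}\), which completes the induction. This also needs \(a_{k+1}\ge a_k\). That holds because LS assigns the new job to a machine whose completion time is currently minimal, and that machine's completion time can only increase, so the minimum is nondecreasing.

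The monotonicity step and the telescoping are routine. The only point I would check carefully is that the hypotheses of Lemma~\ref{lem:local-minimum-increase-density-three} really apply at every insertion, including early ones where some machines are still empty with completion time \(0\). In that lemma the terminal jobs \(X\) and \(Y\) on the other two machines are assumed to exist with completion times at least \(a'\). When \(a'>a\) this is guaranteed, since all three machines must then have completion times at least \(a'>0\) and so carry jobs. When \(a'=a\) the lemma is trivial. So no extra case arises, and the main work has already been done in the local density lemmas.
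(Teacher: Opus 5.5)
Your proposal is correct and follows the paper's proof essentially step for step. Both use induction over LS insertions, monotonicity of the profile together with the induction hypothesis on $[0,a_{\rm old})$, and Lemma~\ref{lem:local-minimum-increase-density-three} on $[a_{\rm old},a_{\rm new})$. Your unified handling of the case $a_{\rm new}=a_{\rm old}$ (an empty second sum) and your explicit checks that the minimum is nondecreasing and that the local lemma's hypotheses hold are small points the paper leaves implicit.
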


\begin{proof}
We argue by induction over the LS insertions. The empty prefix is immediate.
Consider one insertion, and let \(a_{\rm old}\) and \(a_{\rm new}\) be the
minimum machine completion times before and after it. If
\(a_{\rm new}=a_{\rm old}\), the interval \([0,a_{\rm new})\) is unchanged and
the resource profile cannot decrease.

If \(a_{\rm new}>a_{\rm old}\), the induction hypothesis gives
\[
        \sum_{t=0}^{a_{\rm old}-1}R_{\rm old}(t)\ge2a_{\rm old}.
\]
Adding a job cannot decrease \(R(t)\) on earlier slots, and
Lemma~\ref{lem:local-minimum-increase-density-three} gives
\[
        \sum_{t=a_{\rm old}}^{a_{\rm new}-1}R_{\rm new}(t)
        \ge2(a_{\rm new}-a_{\rm old}).
\]
Combining the two inequalities gives
\[
        \sum_{t=0}^{a_{\rm new}-1}R_{\rm new}(t)\ge2a_{\rm new}.
\]
\end{proof}

\subsection{The approximation guarantee}

\begin{theorem}
\label{thm:ls-three-five-halves}
For problem 
\[
        P3,S1\mid s_j=t_j=1\mid C_{\max},
\]
LS satisfies
\[
        C_{\max}^{\mathrm{LS}}\le \frac52 C^* .
\]
\end{theorem}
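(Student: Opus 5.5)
Consider the job $J_\ell$ that attains the makespan in the LS schedule, and the LS prefix just before $J_\ell$ is inserted. Let $a=\min_i c_i$ be the minimum machine completion time of that prefix; this is the availability of the machine LS selects for $J_\ell$. Let $S$ be the loading time LS assigns to $J_\ell$. Then
\[
        C_{\max}^{\rm LS}=S+e_\ell=a+(S-a)+e_\ell .
\]
The plan is to bound $a+(S-a)$ by resource counting in the prefix and to bound $e_\ell$ by $C^*$. One may assume $J_\ell$ is the last job in the list, since deleting later jobs changes neither $C_{\max}^{\rm LS}$ nor increases $C^*$.

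\textbf{Resource density up to $S$.} Apply Lemma~\ref{lem:minimum-frontier-density-three} to the prefix to get $\sum_{t=0}^{a-1}R(t)\ge 2a$. Apply Lemma~\ref{lem:local-blocking-density-three} to the insertion of $J_\ell$ to get $\sum_{t=a}^{S-1}R(t)\ge 2(S-a)-1$. Adding these gives
\[
        \sum_{t=0}^{S-1}R(t)\ge 2S-1 .
\]
The left-hand side involves only prefix jobs. It is therefore at most $(E-e_\ell)+2(n-1)$, because each prefix job contributes $e_j$ machine units and $2$ server units. Hence
\[
        2S\le E-e_\ell+2n-1 .
\]

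\textbf{Combining with the lower bounds.} Lemma~\ref{lem:lower-bounds} gives $E\le 3C^*$ and $2n\le C^*$. Thus $2S\le 4C^*-e_\ell-1$, and so
\[
        C_{\max}^{\rm LS}=S+e_\ell\le 2C^*+\tfrac12 e_\ell-\tfrac12\le 2C^*+\tfrac12 C^*=\tfrac52 C^*,
\]
using $e_\ell\le e_{\max}\le C^*$. The counting must not be lossy here, since an extra constant slack would break the clean bound. This is why the $-1$ in Lemma~\ref{lem:local-blocking-density-three} and the exact form of the minimum-frontier lemma matter. Only the weak bounds $E\le 3C^*$ and $2n\le C^*$ are needed, so their sum $4C^*$ works directly and no sharper combination is required.

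\textbf{Main obstacle.} The arithmetic itself is routine once the density $\sum_{t<S}R(t)\ge 2S-1$ is in hand. The delicate step is justifying that summing $R$ over $[0,S)$ in the prefix counts each unit of prefix resource at most once. No slot can be double counted, because $R$ is defined pointwise on the prefix schedule. Resource units of prefix jobs lying at or after $S$ are simply dropped, which only helps the inequality. I would also check the degenerate cases. If $S=a$, Lemma~\ref{lem:local-blocking-density-three} is vacuous: its sum is empty and its right-hand side $2(S-a)-1=-1$ holds trivially, so the combined estimate is $\sum_{t=0}^{S-1}R(t)\ge 2S$, which is even stronger. If $a=0$, the frontier lemma is trivial and the argument goes through unchanged.
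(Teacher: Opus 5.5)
Your proposal is correct and follows the paper's proof essentially line for line. You combine Lemmas~\ref{lem:minimum-frontier-density-three} and~\ref{lem:local-blocking-density-three} to get $\sum_{t<S}R(t)\ge 2S-1$, and bounding this sum by the prefix's total resource $E-e_\ell+2(n-1)$ is the same inequality as the paper's $E+2n\ge(2S^*-1)+(e^*+2)$; after that, $E\le 3C^*$, $2n\le C^*$ and $e_\ell\le C^*$ finish the argument, and your reduction to $J_\ell$ being the last job is harmless but unnecessary, since $\sum_{j<\ell}(e_j+2)\le E-e_\ell+2(n-1)$ holds for every $\ell$.
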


\begin{proof}
Let \(J^*\) be a job attaining the makespan in the LS schedule, with loading
time \(S^*\) and execution length \(e^*\). Thus, 
\[
        C_{\max}^{\mathrm{LS}}=S^*+e^* .
\]
Let \(a\) be the selected-machine availability immediately before \(J^*\) is
inserted, and let \(R(t)\) be the resource profile of the prefix before this
insertion. By Lemmas~\ref{lem:minimum-frontier-density-three} and
\ref{lem:local-blocking-density-three}, we get 
\[
        \sum_{t=0}^{S^*-1}R(t)
        \ge
        2a+2(S^*-a)-1
        =
        2S^*-1 .
\]

In the final schedule, job \(J^*\) contributes \(e^*\) units of machine occupation
and two server operations, all in slots from \(S^*\) onward. These \(e^*+2\)
resource units are disjoint from the prefix resource counted on
\([0,S^*)\). Since the total resource in the final schedule is \(E+2n\), we have 
\[
        E+2n
        \ge
        (2S^*-1)+(e^*+2)
        =
        2S^*+e^*+1 .
\]
In particular,
\[
        E+2n\ge2S^*+e^* .
\]
Since \(e^*\le e_{\max}\),
\[
        E+2n+e_{\max}
        \ge
        2S^*+2e^*
        =
        2C_{\max}^{\mathrm{LS}}.
\]
Therefore, we have
\[
        C_{\max}^{\mathrm{LS}}
        \le
        \frac{E+2n+e_{\max}}2 .
\]
By Lemma~\ref{lem:lower-bounds},
\[
        E\le3C^*,
        \qquad
        2n\le C^*,
        \qquad
        e_{\max}\le C^* .
\]
Substituting, we get
\[
        C_{\max}^{\mathrm{LS}}
        \le
        \frac{3C^*+C^*+C^*}{2}
        =
        \frac52 C^* .
\]
\end{proof}


\section{General Performance of List Scheduling}
\label{sec:ls-general-performance}

We consider the problem
$P,S1\mid s_j=t_j=1\mid C_{\max},$ for $
 m\ge 3$ and $n\ge 1 .$
For each job \(J_j\), define its execution length by
$e_j=p_j+2 .$

Thus,  \(e_j\in\mathbb Z_{\ge 3}\). If \(J_j\) starts loading at integer time
\(S_j\), then its unloading slot and completion time are
\[
U_j=S_j+e_j-1,
\qquad
C_j=S_j+e_j .
\]
A server operation at integer time \(t\) occupies the unit slot \([t,t+1)\).
The common server can process at most one loading or unloading operation in any
slot. Hence the server slots
\[
S_j,\quad U_j,\qquad j=1,\ldots,n,
\]
are pairwise distinct.

Let
\[
E=\sum_{j=1}^{n}e_j,
\qquad
e_{\max}=\max_{1\le j\le n}e_j .
\]
List scheduling (LS) considers the jobs in the prescribed list order. When job
\(J_k\) is considered, LS selects a machine with minimum current completion time
and schedules \(J_k\) at the earliest integer loading time for which both its
loading and unloading server slots are free.

We shall use the following lower bounds. The first three are standard. The last
one is a startup and termination bound specific to the loading-unloading setting.

\begin{lemma}
\label{lem:ls-lower-bounds}
Every feasible integral schedule satisfies
\[
C^*\ge \frac{E}{m},
\qquad
C^*\ge 2n,
\qquad
C^*\ge e_{\max},
\qquad
C^*\ge \frac{E+2(m-1)}{m}.
\]
\end{lemma}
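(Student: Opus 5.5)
This lemma repeats Lemma~\ref{lem:lower-bounds}, so the plan is either to invoke that lemma directly or to rerun its four short arguments; I would do the latter for completeness.

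\textbf{First bound.} The machine-workload bound follows because each job occupies one machine for exactly $e_j$ unit slots inside $[0,C^*)$. The $m$ machines therefore provide at most $mC^*$ units of capacity, so $E\le mC^*$.

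\textbf{Second bound.} The server bound counts server slots. The $2n$ operations $S_j,U_j$ occupy pairwise distinct integer slots in $[0,C^*)$, which gives $2n\le C^*$.

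\textbf{Third bound.} The bound $C^*\ge e_{\max}$ holds because a job of length $e_{\max}$ starts at time $S_j\ge 0$ and completes at $S_j+e_{\max}\le C^*$.

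\textbf{Fourth bound.} This is the only step needing care, and I would argue it exactly as in Lemma~\ref{lem:lower-bounds}.
\begin{itemize}
\item Normalise the schedule so that the first loading occurs in slot $0$.
\item In slot $[0,1)$, any job occupying a machine must have started at time $0$, since nothing starts earlier. It is therefore being loaded in that slot. Since the server loads at most one job per slot, at most one machine is busy, leaving at least $m-1$ idle machine units.
\item Symmetrically, in slot $[C^*-1,C^*)$, any job occupying a machine must complete at $C^*$, and so is being unloaded in that slot. Again at most one machine is busy, leaving at least $m-1$ idle units.
\item These two slots are distinct whenever $C^*\ge 2$, which holds because $C^*\ge e_{\max}\ge 3$.
\end{itemize}
The total machine capacity is then $E\le mC^*-2(m-1)$, which rearranges to $C^*\ge \bigl(E+2(m-1)\bigr)/m$.

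The one subtle point is to justify that a job occupying a machine in the first or last slot must be using the server in that same slot. This follows from the timing convention: a job occupies its machine on $[S_j,C_j)$ with loading in slot $S_j$ and unloading in slot $C_j-1$. The only other check is that the first and last slots do not coincide, which the bound $e_j\ge 3$ settles.
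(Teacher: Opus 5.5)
Your proposal is correct and follows the paper's proof essentially verbatim. It gives the workload, server-count and longest-job bounds, then the startup/termination argument: at least $m-1$ machine units are idle in each of the first and last slots, these slots are distinct since $C^*\ge 3$, and so $E\le mC^*-2(m-1)$. Your explicit justification that a machine-active job in slot $0$ (resp.\ slot $C^*-1$) must be loading (resp.\ unloading) in that slot usefully spells out a step the paper states only briefly.
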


\begin{proof}
The total machine occupation is \(E\), and hence \(C^*\ge E/m\). The server
performs exactly \(2n\) unit operations, and hence \(C^*\ge 2n\). Moreover, each job
occupies a machine continuously for \(e_j\) time units, so \(C^*\ge e_{\max}\).

It remains to prove the last bound. We may shift any feasible schedule so that
its first server operation starts at time zero. In the first slot \([0,1)\), at
most one job can be loaded, and hence at most one machine can be occupied. Thus,
at least \(m-1\) machine-capacity units are idle in this slot.

Consider the last slot \([C^*-1,C^*)\) of an optimal integral schedule. Any job
active in this slot must complete at time \(C^*\), and hence must use the server
for unloading in slot \(C^*-1\). Since the server can unload at most one job in
this slot, at most one machine can be occupied in the last slot. Thus at least
\(m-1\) further machine-capacity units are idle in this slot.

Because \(e_j\ge 3\) and \(n\ge1\), we have \(C^*\ge3\), so the first and last
slots are distinct. Therefore, the total machine occupation satisfies
\[
E\le mC^*-2(m-1),
\]
which gives
\[
C^*\ge \frac{E+2(m-1)}{m}.
\]
\end{proof}

\subsection{A machine-sequence estimate}
\label{subsec:ls-single-chain-estimate}

Fix the partial LS schedule immediately before job \(J_\ell\) is inserted, and
write
\[
        h=\ell-1.
\]
For each machine \(i\), let
\[
        J_{i,1},J_{i,2},\ldots,J_{i,r_i}
\]
be the jobs among \(J_1,\ldots,J_h\) assigned to that machine, listed in their
processing order. Denote their loading times, execution lengths, and completion
times by
\[
        S_{i,a},\qquad e_{i,a},\qquad
        C_{i,a}=S_{i,a}+e_{i,a},
        \qquad a=1,\ldots,r_i.
\]
Set \(C_{i,0}=0\), and define
\[
        L_i=\sum_{a=1}^{r_i}e_{i,a}.
\]
Then
\[
        \sum_{i=1}^m L_i=\sum_{j<\ell}e_j,
        \qquad
        \sum_{i=1}^m r_i=h.
\]

Let \(c_i=C_{i,r_i}\) if \(r_i>0\), and let \(c_i=0\) otherwise. Suppose that job
\(J_\ell\) were assigned to machine \(i\), and let \(T_i\) be its earliest
server-feasible loading time not earlier than \(c_i\). The idle slots on machine
\(i\) before the loading of job  \(J_\ell\) are
\[
\begin{aligned}
        \mathcal W_i
        ={}&
        \bigcup_{a=1}^{r_i}
        [C_{i,a-1},S_{i,a})_{\mathbb Z}
        \\
        &{}\cup
        [c_i,T_i)_{\mathbb Z}.
\end{aligned}
\]
These intervals are pairwise disjoint. Let
\[
        W_i=|\mathcal W_i|.
\]
Since the interval from time zero to \(T_i\) consists of \(L_i\) execution slots
and \(W_i\) idle slots,
\[
        T_i=L_i+W_i
\]
holds. The essential observation is that none of these idle slots can be attributed to
a server operation of a job already assigned to machine \(i\).

\begin{lemma}
\label{lem:ls-chain-waiting}
For every machine \(i\), we have
\[
        W_i\le 4(h-r_i).
\]
\end{lemma}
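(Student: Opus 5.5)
The plan is a charging argument: every idle slot in $\mathcal W_i$ is charged to a server operation of a prefix job on a machine other than $i$, and each such server operation receives at most two charges. There are $h-r_i$ prefix jobs on other machines, each with two server slots, so there are $2(h-r_i)$ possible targets. This gives $W_i\le 2\cdot 2(h-r_i)=4(h-r_i)$.

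\emph{Step 1 (every idle slot is blocked).} Take an idle slot $t\in[C_{i,a-1},S_{i,a})_{\mathbb Z}$, where $1\le a\le r_i$. When LS inserted $J_{i,a}$, it chose machine $i$, whose availability was $C_{i,a-1}$, and it picked $S_{i,a}$ as the earliest server-feasible candidate. By Lemma~\ref{lem:blocking-cover}, $t\in B\cup(B-d_{i,a})$, where $d_{i,a}=e_{i,a}-1$ and $B$ is the set of server slots occupied at that moment. Every job in that $B$ was inserted before $J_{i,a}$, so it lies among $J_1,\ldots,J_h$. The same reasoning covers the final interval $[c_i,T_i)_{\mathbb Z}$: by definition $T_i$ is the earliest feasible candidate for $J_\ell$ at or after $c_i$, with offset $d_\ell$, against the current prefix. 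Since $B$ only grows, the blocker stays in the final prefix.

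\emph{Step 2 (blockers are not on machine $i$).} Let $s$ be the blocker of $t$, so $s=t$ or $s=t+d$ with $d\ge0$. In both cases $s\ge t\ge C_{i,a-1}$. Every job on machine $i$ inserted before $J_{i,a}$ completes by $C_{i,a-1}$, so all its server slots lie strictly before $C_{i,a-1}$. Hence $s$ belongs to a job on another machine. For the final interval the same argument uses $c_i$ in place of $C_{i,a-1}$.

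\emph{Step 3 (each blocker slot is charged at most twice).} For a fixed server slot $s$, at most one idle slot $t$ is directly blocked by $s$, namely $t=s$. The main point is to show that at most one idle slot is shift-blocked by $s$, even though the offset depends on which gap $t$ lies in.

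For $t$ in the gap before $J_{i,a}$, we have
\[
C_{i,a-1}\le t\le t+d_{i,a}<S_{i,a}+d_{i,a}=U_{i,a}<C_{i,a}.
\]
So the shifted position $t+d_{i,a}$ lies in $[C_{i,a-1},C_{i,a})$. These intervals are pairwise disjoint as $a$ varies. For the final gap the shifted positions lie in $[c_i,\infty)$, which is disjoint from all of them. Within a single gap the offset is constant, so the map $t\mapsto t+d$ is injective there. Hence the map from shift-blocked idle slots to their shifted positions is injective overall.

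If a slot $t$ is both directly and shift-blocked, it is charged to only one of its blockers. Combining the three steps gives $W_i\le 2\cdot 2(h-r_i)=4(h-r_i)$.

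The delicate part is Step 3, specifically the disjointness of the windows $[C_{i,a-1},C_{i,a})$ that contain the shifted positions. I expect this to be the main obstacle and will check it carefully, including the strict inequality $U_{i,a}<C_{i,a}$ and the final gap.
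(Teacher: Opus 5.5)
Your proof is correct and follows the paper's charging argument. Each idle slot is charged to a server operation of a prefix job on another machine, since jobs already on machine $i$ finish before the candidate, and each of the $2(h-r_i)$ such operations receives at most two charges. The only variation is the last step: you count at most one direct and at most one shifted charge per operation, using the disjoint windows $[C_{i,a-1},C_{i,a})$. The paper instead shows that an operation at $s$ can block candidates of only one job $J$ on machine $i$, because that job keeps the machine busy past $s$, and hence only $t=s$ and $t=s-d_J$; both versions rest on the same fact that blockers for $J_{i,a}$ lie before $C_{i,a}$.
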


\begin{proof}
Associate each slot in \(\mathcal W_i\) with one server operation that makes the
corresponding loading candidate infeasible.

Consider first a slot
\[
        t\in[C_{i,a-1},S_{i,a})_{\mathbb Z}
\]
before an actual job \(J_{i,a}\) on machine \(i\). At the time when
\(J_{i,a}\) was inserted, machine \(i\) was available from
\(C_{i,a-1}\) onward. Since \(S_{i,a}\) is the earliest server-feasible loading
time, the candidate \(t\) was rejected. Thus, either \(t\) or
\[
        t+d_{i,a},
        \qquad d_{i,a}=e_{i,a}-1,
\]
was occupied by a server operation that had already been scheduled. Choose one
such operation.

For a slot
\[
        t\in[c_i,T_i)_{\mathbb Z},
\]
choose in the same way one server operation from the partial schedule
\(J_1,\ldots,J_h\) that blocks the loading of \(J_\ell\).

Every chosen operation belongs to a job assigned to a machine other than
\(i\). Indeed, before \(J_{i,a}\) is loaded, all earlier jobs on machine \(i\)
have been completed by time \(C_{i,a-1}\le t\), so their loading and unloading
operations occur strictly before \(t\). Jobs later in the sequence on machine
\(i\) had not yet been inserted. Likewise, before the possible loading of job 
\(J_\ell\), all \(r_i\) jobs already assigned to machine \(i\) have been completed
by time \(c_i\le t\). Hence none of their server operations can block the
candidate.

The \(h-r_i\) jobs assigned to the other machines contribute exactly
\[
        2(h-r_i)
\]
server operations. Fix one of these operations and let \(s\) be its time slot.
For a given job \(J\), with \(d_J=e_J-1\), the operation at \(s\) can block
only the two candidate loading times
\[
        t=s
        \qquad\text{and}\qquad
        t=s-d_J.
\]

Moreover, the same operation cannot block candidates belonging to two different
jobs on machine \(i\). Suppose that it blocks a candidate \(t\) before a job
\(J\). In the direct case, \(s=t\), and \(J\) is loaded no earlier than
\(s+1\). In the shifted case, \(s=t+d_J\), and the completion time of \(J\) is
at least
\[
        t+1+e_J=s+2.
\]
Thus, after this rejection, machine \(i\) remains occupied beyond time \(s\).
Every loading candidate for a later job on the same machine is therefore larger
than \(s\), whereas the operation at \(s\) can block only \(s\) or a time
smaller than \(s\).

Consequently, each server operation of a job assigned to another machine is
associated with at most two slots of \(\mathcal W_i\). Therefore, inequality
\[
        W_i\le 2\cdot 2(h-r_i)=4(h-r_i)
\]
holds.
\end{proof}

\begin{lemma}
\label{lem:ls-critical-start}
Let \(J_\ell\) be a critical job in the LS schedule. Then
\[
        S_\ell
        \le
        \frac{\sum_{j<\ell}e_j}{m}
        +
        4\left(1-\frac1m\right)(\ell-1).
\]
Consequently,
\[
        C_{\max}^{\mathrm{LS}}
        \le
        \frac{E}{m}
        +
        4\left(1-\frac1m\right)(\ell-1)
        +
        \left(1-\frac1m\right)e_\ell.
\]
\end{lemma}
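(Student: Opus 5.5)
The plan is to combine the machine-sequence identity \(T_i=L_i+W_i\) with Lemma~\ref{lem:ls-chain-waiting} and average over all machines. Fix the partial LS schedule immediately before the critical job \(J_\ell\) is inserted, so \(h=\ell-1\). LS selects a machine \(i^\circ\) with minimum current completion time \(c_{i^\circ}=\min_i c_i\) and loads \(J_\ell\) at the earliest server-feasible time not earlier than \(c_{i^\circ}\). Hence \(S_\ell=T_{i^\circ}\).

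The key point is to compare \(T_{i^\circ}\) with \(T_i\) for an arbitrary machine \(i\). It is not automatic that \(T_{i^\circ}\le T_i\): the set of server-feasible candidates is the same for every machine, since it depends only on the occupied set \(B\) and the offset \(d_\ell\), but the lower thresholds \(c_i\) differ. Since \(c_{i^\circ}\le c_i\), the earliest feasible time \(\ge c_{i^\circ}\) is at most the earliest feasible time \(\ge c_i\). So \(T_{i^\circ}\le T_i\) for every \(i\), and this monotonicity is the step I would state carefully. Averaging over \(i\) then gives
\[
S_\ell=T_{i^\circ}\le\frac1m\sum_{i=1}^m T_i=\frac1m\sum_{i=1}^m (L_i+W_i).
\]
Next, substitute \(\sum_i L_i=\sum_{j<\ell}e_j\) and Lemma~\ref{lem:ls-chain-waiting}, \(W_i\le 4(h-r_i)\). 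Summing,
\[
\sum_i W_i\le 4\Bigl(mh-\sum_i r_i\Bigr)=4(m-1)h,
\]
and dividing by \(m\) yields the first inequality with the term \(4(1-1/m)(\ell-1)\).

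For the consequence, I would use that \(J_\ell\) is critical, so
\[
C_{\max}^{\mathrm{LS}}=S_\ell+e_\ell\le \frac{\sum_{j<\ell}e_j}{m}+4\Bigl(1-\frac1m\Bigr)(\ell-1)+e_\ell .
\]
Writing \(e_\ell=e_\ell/m+(1-1/m)e_\ell\) and using \(\sum_{j<\ell}e_j+e_\ell\le E\) gives the stated bound.

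I expect the main obstacle to be justifying \(T_{i^\circ}\le T_i\), that is, that the feasibility test for the hypothetical placement of \(J_\ell\) on machine \(i\) uses the same blocking set \(B\) regardless of machine. This holds because in the definition of \(T_i\) only server conflicts matter; machine \(i\)'s own jobs finish by \(c_i\) and cannot create conflicts at or after it. Everything else is direct summation.
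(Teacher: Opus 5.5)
Your proposal is correct and follows the paper's proof essentially step for step. The paper packages your monotonicity observation as the non-decreasing function $F_\ell$, with $S_\ell=F_\ell(a_\ell)$ and $T_i=F_\ell(c_i)$, bounds $S_\ell$ by the minimum and hence the average of $L_i+4(h-r_i)$, and finishes with $\sum_{j<\ell}e_j\le E-e_\ell$ exactly as you do.
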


\begin{proof}
Let \(c_i\) be the completion time of machine \(i\) immediately before
\(J_\ell\) is inserted, and let
\[
        a_\ell=\min_{1\le i\le m}c_i.
\]
For the server schedule generated by the jobs  \(J_1,\ldots,J_h\), define
\[
        F_\ell(a)
        =
        \min\left\{
        t\in\mathbb Z_{\ge a}:
        t\ \text{and}\ t+d_\ell\ \text{are free server slots}
        \right\}.
\]
The function \(F_\ell\) is non-decreasing. Since Algorithm LS selects a machine with
minimum completion time, we have 
\[
        S_\ell=F_\ell(a_\ell).
\]
If \(J_\ell\) were assigned to machine \(i\), its earliest feasible loading
time would be
\[
        T_i=F_\ell(c_i).
\]
Because \(a_\ell\le c_i\),
\[
        S_\ell\le T_i
        \qquad
        \text{for every }i.
\]

By Lemma~\ref{lem:ls-chain-waiting},
\[
        T_i=L_i+W_i\le L_i+4(h-r_i).
\]
Therefore,
\[
\begin{aligned}
        S_\ell
        &\le
        \min_{1\le i\le m}
        \left\{L_i+4(h-r_i)\right\} \\
        &\le
        \frac1m\sum_{i=1}^m
        \left\{L_i+4(h-r_i)\right\} \\
        &=
        \frac{\sum_{j<\ell}e_j}{m}
        +
        4\left(1-\frac1m\right)h.
\end{aligned}
\]
Using \(h=\ell-1\) gives the first inequality. Since
\[
        \sum_{j<\ell}e_j\le E-e_\ell,
\]
we further obtain
\[
\begin{aligned}
        C_{\max}^{\mathrm{LS}}
        &=S_\ell+e_\ell \\
        &\le
        \frac{E-e_\ell}{m}
        +
        4\left(1-\frac1m\right)(\ell-1)
        +e_\ell \\
        &=
        \frac{E}{m}
        +
        4\left(1-\frac1m\right)(\ell-1)
        +
        \left(1-\frac1m\right)e_\ell.
\end{aligned}
\]
\end{proof}

\begin{theorem}
\label{thm:ls-general-bound}
For problem 
\[
        Pm,S1\mid s_j=t_j=1,\ p_j\ge1\mid C_{\max},
\]
with \(m\ge3\), Algorithm LS satisfies the inequality 
\[
        C_{\max}^{\mathrm{LS}}
        \le
        \left(
        4-\frac3m-\frac{2(m-1)}{mn}
        \right)C^*.
\]
\end{theorem}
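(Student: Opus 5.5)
The plan is to derive the bound from Lemma~\ref{lem:ls-critical-start} by a term-by-term substitution of the lower bounds in Lemma~\ref{lem:ls-lower-bounds}. The only step needing care is how to use the server bound \(C^*\ge 2n\), which is where the finite-\(n\) correction term comes from.

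First, let \(J_\ell\) be a critical job, i.e. one with \(C_\ell=C_{\max}^{\mathrm{LS}}\). Lemma~\ref{lem:ls-critical-start} gives
\[
C_{\max}^{\mathrm{LS}}\le \frac{E}{m}+4\left(1-\frac1m\right)(\ell-1)+\left(1-\frac1m\right)e_\ell .
\]
Since \(\ell\le n\) and the coefficient \(4(1-1/m)\) is nonnegative, I will replace \(\ell-1\) by \(n-1\). The machine-workload bound gives \(E/m\le C^*\), and \(e_\ell\le e_{\max}\le C^*\).

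Next, the middle term is handled in a specific way. One could use \(4(n-1)=2\cdot 2n-4\le 2C^*-4\), which leaves an additive constant. That constant does not yield the multiplicative finite-\(n\) correction in the statement, since it would require \(C^*\le 3n\). Instead I will write
\[
4(n-1)=4n\left(1-\frac1n\right)\le 2C^*\left(1-\frac1n\right)=2C^*-\frac{2C^*}{n},
\]
again using \(C^*\ge 2n\). This step is valid because \(1-1/n\ge 0\).

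Finally, combining the three estimates gives
\[
C_{\max}^{\mathrm{LS}}\le C^*+\left(1-\frac1m\right)\left(2C^*-\frac{2C^*}{n}\right)+\left(1-\frac1m\right)C^* .
\]
This simplifies to
\[
\left(1+3-\frac3m-\frac{2(m-1)}{mn}\right)C^*=\left(4-\frac3m-\frac{2(m-1)}{mn}\right)C^*,
\]
which is the claimed bound. The main obstacle is only recognising the multiplicative form of the \(2n\le C^*\) substitution above; everything else is routine algebra. The stronger bound \(C^*\ge (E+2(m-1))/m\) is not needed here.
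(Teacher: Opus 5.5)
Your proof is correct and matches the paper's argument essentially verbatim. You apply Lemma~\ref{lem:ls-critical-start}, use $\ell-1\le n-1$, $E/m\le C^*$ and $e_\ell\le e_{\max}\le C^*$, and then the multiplicative substitution $4(n-1)=2(1-\frac1n)(2n)\le 2(1-\frac1n)C^*$, which is exactly how the paper gets the $-\frac{2(m-1)}{mn}$ term.

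One slip affects only your side remark, not the proof: the additive estimate $4(n-1)\le 2C^*-4$ would yield the stated bound exactly when $C^*\le 2n$, not $C^*\le 3n$. Since $C^*\ge 2n$, the multiplicative estimate is never weaker.
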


\begin{proof}
Let \(J_\ell\) be a critical job. By Lemma~\ref{lem:ls-critical-start}, we obtain 
\[
        C_{\max}^{\mathrm{LS}}
        \le
        \frac{E}{m}
        +
        4\left(1-\frac1m\right)(\ell-1)
        +
        \left(1-\frac1m\right)e_\ell.
\]
Since \(\ell\le n\) and \(e_\ell\le e_{\max}\) we get,
\[
        C_{\max}^{\mathrm{LS}}
        \le
        \frac{E}{m}
        +
        4\left(1-\frac1m\right)(n-1)
        +
        \left(1-\frac1m\right)e_{\max}.
\]
Using
\[
        \frac{E}{m}\le C^*,
        \qquad
        2n\le C^*,
        \qquad
        e_{\max}\le C^*,
\]
and
\[
\begin{aligned}
        4\left(1-\frac1m\right)(n-1)
        &=
        2\left(1-\frac1m\right)
        \left(1-\frac1n\right)(2n) \\
        &\le
        2\left(1-\frac1m\right)
        \left(1-\frac1n\right)C^*,
\end{aligned}
\]
we obtain
\[
\begin{aligned}
        C_{\max}^{\mathrm{LS}}
        &\le
        \left[
        1
        +
        2\left(1-\frac1m\right)
        \left(1-\frac1n\right)
        +
        \left(1-\frac1m\right)
        \right]C^* \\
        &=
        \left(
        4-\frac3m-\frac{2(m-1)}{mn}
        \right)C^*.
\end{aligned}
\]
\end{proof}

\section{Performance of the LPT Rule for Multiple Machines}
\label{sec:lpt-general-m}

We now analyse the LPT rule. Since \(e_j=p_j+2\), ordering the jobs by
non-increasing processing time is equivalent to ordering them by nonincreasing
execution length. Hence the jobs are indexed so that
\[
        e_1\ge e_2\ge\cdots\ge e_n.
\]
They are then scheduled by the same insertion rule used by Algorithm LS.

\begin{theorem}
\label{thm:lpt-general-m}
For problem 
\[
        Pm,S1\mid s_j=t_j=1,\ p_j\ge1\mid C_{\max},
\]
with \(n\ge m\ge3\), Algorithm LPT satisfies the inequality 
\[
        C_{\max}^{\mathrm{LPT}}
        \le
        \left(
        3-\frac2m+
        \frac{(m-1)(m-2)}{mn}
        \right)C^*.
\]
\end{theorem}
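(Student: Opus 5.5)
The plan is to keep the critical-job framework of Section~\ref{sec:ls-general-performance} and sharpen only the waiting estimate of Lemma~\ref{lem:ls-chain-waiting}, using the LPT order $e_1\ge\cdots\ge e_n$. Let $J_\ell$ be a critical job, $h=\ell-1$, and let $L_i,r_i,W_i,T_i$ be as in Subsection~\ref{subsec:ls-single-chain-estimate}. Lemma~\ref{lem:ls-critical-start} gives $S_\ell\le\min_i T_i\le\frac1m\sum_i (L_i+W_i)$ with no change. The target is the aggregate estimate
\[
        \sum_{i=1}^m W_i\le 2(m-1)h+2(m-1)^2 .
\]
This replaces the factor $4$ of Lemma~\ref{lem:ls-chain-waiting} by $2$, at the price of an additive start-up term.

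Granting that estimate, the rest is arithmetic. We get
\[
        C^{\mathrm{LPT}}_{\max}\le \frac Em+2\left(1-\frac1m\right)(n-1)+2\frac{(m-1)^2}{m}+\left(1-\frac1m\right)e_{\max}.
\]
Now use $E/m\le C^*$ and $e_{\max}\le C^*$ from Lemma~\ref{lem:ls-lower-bounds}, together with $2(n-1)\le (1-\frac1n)C^*$ and $2\le C^*/n$. The last two follow from $2n\le C^*$. The coefficient then becomes
\[
        1+\left(1-\frac1m\right)\left(1-\frac1n\right)+\frac{(m-1)^2}{mn}+\left(1-\frac1m\right)=3-\frac2m+\frac{(m-1)(m-2)}{mn},
\]
which is exactly the claimed ratio. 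The hypothesis $n\ge m$ is used only to make the start-up term absorbable in this way. So the whole proof reduces to the improved waiting bound.

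For that bound I would redo the charging of Lemma~\ref{lem:ls-chain-waiting}. Each idle slot of $\mathcal W_i$ is still charged to one foreign server operation at some slot $s$, which blocks it either directly ($t=s$) or in shifted form ($t=s-d_J$). In the general proof an operation may be charged twice per machine, once directly and once shifted, and this is what produces the factor $4$. Under LPT, any job $J$ inserted on machine $i$ after the operation at $s$ was created satisfies $d_J\le d_{J'}$, where $J'$ is the owner of that operation. I would use this monotonicity to show the following: once a foreign job $J'$ has been placed, its loading and unloading slots together can block at most two idle slots on machine $i$ in total. The shifted candidate $s-d_J$ then lies at or after $L_{J'}$, and it falls inside the occupation interval of $J'$ on a different machine; combining this with Lemma~\ref{lem:ls-terminal-property-prelim} should rule out a second charge.

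The exceptions are the operations that were already present when machine $i$ received its first job, i.e.\ the start-up phase in which the first $m$ jobs (the longest ones) are spread one per machine. For these I would allow up to four charges, as in Lemma~\ref{lem:ls-chain-waiting}. Bounding the excess over $2$ charges by $2(m-1)$ per machine, and noting that at least one machine (the first) incurs none, gives the additive $2(m-1)^2$ after summation.

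The main obstacle is the claim that, after start-up, each foreign operation is charged at most twice per machine. The danger is a shifted charge by a later, shorter job combined with a direct charge to the same slot $s$. I would first try to show that these two charges cannot both land on the same machine $i$, by comparing the completion time of the blocked job with $s+2$ as in the final paragraph of the proof of Lemma~\ref{lem:ls-chain-waiting}. If that fails, the fallback is to charge the surplus to the machine-occupation of the owner $J'$ itself, which LPT makes long enough to cover it.
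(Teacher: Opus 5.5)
Your arithmetic is correct: if $\sum_i W_i\le 2(m-1)h+2(m-1)^2$ held, the claimed ratio would follow exactly. But that estimate is the entire proof, and the mechanism you propose for it is false.

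Take $m=3$ and the LPT list with execution lengths $e=(9,7,5,5,5,3)$. LS proceeds as follows:
\begin{itemize}
\item $J_1$ is loaded on $M_1$ at $0$, $J_2$ on $M_2$ at $1$, and $J_3$ on $M_3$ at $2$.
\item $J_4$ is loaded on $M_3$ at $9$, unloading at $13$.
\item $J_5$ is loaded on $M_2$ at $10$, unloading at $14$.
\item $J_6$ (offset $d=2$) goes to $M_1$, which is available at $9$, and is loaded at $15$.
\end{itemize}
The server set is $B=\{0,1,2,6,7,8,9,10,13,14\}$. Since $11,12,15,16\notin B$, the failed candidates $9,11,13$ are blocked only by $J_4$, and $10,12,14$ only by $J_5$. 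So $J_4$ and $J_5$, both placed after $M_1$ received its first job, are each charged three times on $M_1$ under any charging rule.

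The excess arises in exactly the case you flagged. The unloading slot $s=13$ of $J_4$ is charged both directly and in shifted form, but both charges come from the same job $J_6$. The $s+2$ argument of Lemma~\ref{lem:ls-chain-waiting} only prevents one operation from being charged by two different jobs of machine $i$, so it cannot exclude this. Your other observation, that $U_{J'}-d_J$ lies inside the occupation interval of $J'$, gives no contradiction either, since $M_1$ may be idle while $J'$ runs on another machine. The example does not violate your aggregate inequality itself, but it shows the per-job charging cannot prove it, and no other argument is supplied. Your fallback, charging the surplus to the machine occupation of $J'$, would add a multiple of $E/m$ and destroy the coefficient.

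The missing idea is that no sharpening of the waiting bound is needed. The paper keeps Lemma~\ref{lem:ls-critical-start} with the factor $4$ and uses the LPT order only on the critical job, through $\ell e_\ell\le E\le mC^*$. Put $\alpha=1-1/m$ and use $4\alpha(\ell-1)\le 2\alpha(\ell-1)C^*/n$.
\begin{itemize}
\item For $\ell<m$, the waiting term is small because $2(\ell-1)\le n+m-2$.
\item For $\ell\ge m$, one has $\alpha e_\ell\le (m-1)C^*/\ell$. The convex function $f(x)=1+2\alpha(x-1)/n+(m-1)/x$ is maximized on $[m,n]$ at $x=n$, which gives exactly $3-2/m+(m-1)(m-2)/(mn)$.
\end{itemize}
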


\begin{proof}
Let \(J_\ell\) be a critical job, and set
\[
        \alpha=1-\frac1m.
\]
Lemma~\ref{lem:ls-critical-start} gives
\[
        C_{\max}^{\mathrm{LPT}}
        \le
        \frac{E}{m}
        +4\alpha(\ell-1)
        +\alpha e_\ell.
\]
We distinguish two cases.

\paragraph{Case 1: \(\ell<m\).}
Using
\[
        \frac{E}{m}\le C^*,
        \qquad
        2n\le C^*,
        \qquad
        e_\ell\le e_{\max}\le C^*,
\]
we obtain
\[
        \frac{C_{\max}^{\mathrm{LPT}}}{C^*}
        \le
        1+\alpha+
        \frac{2\alpha(\ell-1)}{n}.
\]
Since \(\ell\le m-1\) and \(n\ge m\), we have 
\[
        2(\ell-1)\le2m-4\le n+m-2.
\]
Therefore, we get 
\[
        1+\alpha+
        \frac{2\alpha(\ell-1)}{n}
        \le
        1+2\alpha+
        \frac{\alpha(m-2)}{n}
        =
        3-\frac2m+
        \frac{(m-1)(m-2)}{mn}.
\]

\paragraph{Case 2: \(\ell\ge m\).}
The LPT order implies
\[
        E\ge\sum_{j=1}^{\ell}e_j\ge\ell e_\ell,
\]
so that
\[
        e_\ell\le\frac{E}{\ell}\le\frac{m}{\ell}C^*.
\]
Together with \(E/m\le C^*\) and \(2n\le C^*\), this yields
\[
        \frac{C_{\max}^{\mathrm{LPT}}}{C^*}
        \le
        1+
        \frac{2\alpha(\ell-1)}{n}
        +
        \frac{m-1}{\ell}.
\]
Define
\[
        f(x)
        =
        1+
        \frac{2\alpha(x-1)}{n}
        +
        \frac{m-1}{x}.
\]
Since
\[
        f''(x)=\frac{2(m-1)}{x^3}>0,
\]
\(f\) is convex on \((0,\infty)\). Its maximum on \([m,n]\) is therefore
attained at an endpoint. Moreover,
\[
        f(n)-f(m)
        =
        \frac{(m-1)(n-m)}{mn}
        \ge0.
\]
Hence \(f(\ell)\le f(n)\), and
\[
\begin{aligned}
        f(n)
        &=
        1+
        2\left(1-\frac1m\right)\frac{n-1}{n}
        +
        \frac{m-1}{n} \\
        &=
        3-\frac2m+
        \frac{(m-1)(m-2)}{mn}.
\end{aligned}
\]
This completes the proof.
\end{proof}

\begin{corollary}
\label{cor:lpt-asymptotic}
For every fixed \(m\ge3\), we have 
\[
        \limsup_{n\to\infty}
        \frac{C_{\max}^{\mathrm{LPT}}}{C^*}
        \le
        3-\frac2m.
\]
\end{corollary}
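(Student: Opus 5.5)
The corollary follows by passing to the limit in Theorem~\ref{thm:lpt-general-m}, so the plan is very short. I will fix \(m\ge3\) and consider an arbitrary sequence of instances with \(n\to\infty\). Any instance with \(n\ge m\) lies in the range of Theorem~\ref{thm:lpt-general-m}, and this holds for all sufficiently large \(n\). For such an instance the theorem gives
\[
        \frac{C_{\max}^{\mathrm{LPT}}}{C^*}
        \le
        3-\frac2m+\frac{(m-1)(m-2)}{mn}.
\]
Here \(C^*\ge 2n>0\) by Lemma~\ref{lem:ls-lower-bounds}, so the ratio is well defined.

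Next I will take the supremum of the ratio over all instances with a given \(n\). The right-hand side depends only on \(m\) and \(n\), not on the job data. Hence this supremum, call it \(\rho_m(n)\), is itself at most \(3-2/m+(m-1)(m-2)/(mn)\) for every \(n\ge m\).

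For fixed \(m\), the correction term \((m-1)(m-2)/(mn)\) is nonnegative and tends to \(0\) as \(n\to\infty\). Taking \(\limsup_{n\to\infty}\) on both sides of \(\rho_m(n)\le 3-2/m+(m-1)(m-2)/(mn)\) therefore yields the bound \(3-2/m\). The same conclusion holds along any particular sequence of instances, which is the reading of \(\limsup\) in the statement.

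The only point that needs care is interpreting the \(\limsup\), whether over worst-case instances for each \(n\) or along a sequence, and checking that the theorem's hypothesis \(n\ge m\) eventually holds. Because the bound is uniform in the job data, both interpretations are covered by the same argument. I do not expect any real obstacle here.
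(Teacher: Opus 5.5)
Your proposal is correct and is the same argument as the paper's: pass to the limit in Theorem~\ref{thm:lpt-general-m}, using that $(m-1)(m-2)/(mn)\to 0$ for fixed $m$. Your extra remarks are sound additions the paper leaves implicit: the hypothesis $n\ge m$ holds eventually, the ratio is well defined since $C^*\ge 2n>0$, and the bound is uniform in the job data, so it covers both readings of the $\limsup$.
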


\begin{proof}
The result follows from Theorem~\ref{thm:lpt-general-m}, since
\[
        \frac{(m-1)(m-2)}{mn}\longrightarrow0
\]
as \(n\to\infty\).
\end{proof}

\begin{remark}
For \(m=3\), Theorem~\ref{thm:lpt-general-m} gives
\[
        C_{\max}^{\mathrm{LPT}}
        \le
        \left(\frac73+\frac{2}{3n}\right)C^*.
\]
This coefficient is no larger than \(5/2\) for \(n\ge4\). For \(n=3\), the
bound in Theorem~\ref{thm:ls-three-five-halves} is stronger.
\end{remark}

\section{Conclusion}
\label{sec:conclusion}

This paper studied a parallel machine scheduling problem with a single common server that performs both loading and unloading operations. Each job requires a unit-time loading operation, non-preemptive processing on one of $m$ identical machines, and a unit-time unloading operation by the same server. The resulting model differs from the loading-only common-server setting because every job creates two server operations separated by an exact delay determined by its processing time.

We first established a complexity result for the variable-machine case. By adapting the modular construction of Kravchenko and Werner to the loading-unloading setting, we proved that the decision version of problem 
$ P,S1\mid s_j=t_j=1\mid C_{\max}$ is strongly NP-complete when $m$ is part of the input. Equivalently, the corresponding optimization problem is strongly NP-hard. The proof shows that the second server operation cannot be treated as a minor extension of the loading-only model; the construction must explicitly separate loading and unloading residues to maintain synchronisation.

We then analysed list-based schedules. For three machines, the blocking structure after the minimum machine completion time admits a sharper treatment, leading to the bound
$$ C_{\max}^{\rm LS}\le \frac52 C^* .$$ Since Algorithm LPT generates a particular list schedule, the same guarantee holds for Algorithm LPT on three machines. For arbitrary fixed $m\ge3$, we derived a general upper bound for List Scheduling. In particular, this bound tends to $4-3/m$ as the number of jobs grows. For Algorithm LPT, we proved the finite-instance bound
$$C_{\max}^{\rm LPT}
        \le
        \left(3-\frac2m+\frac{(m-1)(m-2)}{mn}\right)C^*.$$
In particular, for each fixed \(m\), this guarantee tends to \(3-2/m\) as
\(n\to\infty\). These results give the first general performance guarantees for list-based rules in the unit loading-unloading model beyond the two-machine case.

Several directions remain open. A first one is to sharpen the general List
Scheduling bound. Computational evidence suggests that the worst-case ratio may
be strictly below the guarantee proved here, but closing this gap would require
a finer analysis. A second
direction is to extend the approximation analysis to models with arbitrary
loading and unloading times. Finally, the techniques developed in this paper may be useful for variants with
release dates, separate loading and unloading servers, or additional operational
constraints arising in industrial applications.
\bibliographystyle{elsarticle-harv}

\begin{thebibliography}{99}


\bibitem{Graham1966}
Graham, R. L. (1966).
Bounds for certain multiprocessing anomalies.
\emph{Bell System Technical Journal}, 45(9), 1563--1581.
doi:10.1002/j.1538-7305.1966.tb01709.x.

\bibitem{Graham1969}
Graham, R. L. (1969).
Bounds on multiprocessing timing anomalies.
\emph{SIAM Journal on Applied Mathematics}, 17(2), 416--429.
doi:10.1137/0117039.

\bibitem{AllahverdiNgChengKovalyov2008}
Allahverdi, A., Ng, C. T., Cheng, T. C. E., and Kovalyov, M. Y. (2008).
A survey of scheduling problems with setup times or costs.
\emph{European Journal of Operational Research}, 187(3), 985--1032.
doi:10.1016/j.ejor.2006.06.060.

\bibitem{HallPottsSriskandarajah2000}
Hall, N. G., Potts, C. N., and Sriskandarajah, C. (2000).
Parallel machine scheduling with a common server.
\emph{Discrete Applied Mathematics}, 102(3), 223--243.
doi:10.1016/S0166-218X(99)00206-1.

\bibitem{KravchenkoWerner1997}
Kravchenko, S. A., and Werner, F. (1997).
Parallel machine scheduling problems with a single server.
\emph{Mathematical and Computer Modelling}, 26(12), 1--11.
doi:10.1016/S0895-7177(97)00236-7.

\bibitem{BruckerDhaenensFlipoKnustKravchenkoWerner2002}
Brucker, P., Dhaenens-Flipo, C., Knust, S., Kravchenko, S. A., and Werner, F. (2002).
Complexity results for parallel machine problems with a single server.
\emph{Journal of Scheduling}, 5(6), 429--457.
doi:10.1002/jos.120.

\bibitem{JiangWangZhou2014}
Jiang, Y., Wang, H., and Zhou, P. (2014).
An optimal preemptive algorithm for the single-server parallel machine scheduling with loading and unloading times.
\emph{Asia-Pacific Journal of Operational Research}, 31(5), 1450039.
doi:10.1142/S0217595914500390.

\bibitem{JiangZhangHuDongJi2015}
Jiang, Y., Zhang, Q., Hu, J., Dong, J., and Ji, M. (2015).
Single-server parallel-machine scheduling with loading and unloading times.
\emph{Journal of Combinatorial Optimization}, 30(2), 201--213.
doi:10.1007/s10878-014-9727-z.

\bibitem{ElidrissiBenmansourHasaniWerner2024}
Elidrissi, A., Benmansour, R., Hasani, K., and Werner, F. (2024).
Minimizing the makespan on two parallel machines with a common server in charge of loading and unloading operations.
\emph{Computers \& Operations Research}, 167, 106638.
doi:10.1016/j.cor.2024.106638.

\bibitem{KhatamiSalehipourCheng2020}
Khatami, M., Salehipour, A., and Cheng, T. C. E. (2020).
Coupled task scheduling with exact delays: Literature review and models.
\emph{European Journal of Operational Research}, 282(1), 19--39.
doi:10.1016/j.ejor.2019.08.045.

\bibitem{ZhenLiangZhugeLeeChew2017}
Zhen, L., Liang, Z., Zhuge, D., Lee, L. H., and Chew, E. P. (2017).
Daily berth planning in a tidal port with channel flow control.
\emph{Transportation Research Part B: Methodological}, 106, 193--217.
doi:10.1016/j.trb.2017.10.008.

\bibitem{LiuLiShengWang2021}
Liu, B., Li, Z.-C., Sheng, D., and Wang, Y. (2021).
Integrated planning of berth allocation and vessel sequencing in a seaport with one-way navigation channel.
\emph{Transportation Research Part B: Methodological}, 143, 23--47.
doi:10.1016/j.trb.2020.10.010.

\bibitem{ZhangZheng2020}
Zhang, B., and Zheng, Z. (2020).
Model and algorithm for vessel scheduling through a one-way tidal channel.
\emph{Journal of Waterway, Port, Coastal, and Ocean Engineering}, 146(1), 04019032.
doi:10.1061/(ASCE)WW.1943-5460.0000545.

\end{thebibliography}

\end{document}